\documentclass{amsart}  

\usepackage{amsfonts,amssymb,amsmath,amsthm}

\usepackage{geometry}

\title[Purely infinite corona algebras and extensions]{A note on purely infinite corona algebras and extensions}

\author{P. W. Ng}

\address{Department of Mathematics\\  University of Louisiana at Lafayette\\
217 Maxim Doucet Hall\\   1401 Johnston St.\\
Lafayette, Louisiana\\
70503\\  USA}

\email{png@louisiana.edu}

\author{Cangyuan Wang}

\address{School of Mathematical Sciences\\
	Ocean University of China\\
	238 Songling Road, Laoshan District\\
	Qingdao, Shandong\\
266100\\ 
P. R. China}

\email{cyw@ouc.edu.cn}

\newtheorem{thm}{Theorem}[section]  
\newtheorem{lem}[thm]{Lemma}
\newtheorem{prop}[thm]{Proposition}  

\newtheorem{df}[thm]{Definition}

\theoremstyle{definition}
\newtheorem{para}[thm]{}

\newcommand{\A}{\mathcal{A}}
\newcommand{\B}{\mathcal{B}}
\newcommand{\C}{\mathcal{C}}
\newcommand{\D}{\mathcal{D}}
\newcommand{\E}{\mathcal{E}}

\newcommand{\I}{\mathcal{I}}
\newcommand{\J}{\mathcal{J}}
\newcommand{\K}{\mathcal{K}} 
\newcommand{\Z}{\mathcal{Z}} 

\newcommand{\M}{\mathbb{M}}

\newcommand{\Mul}{\mathcal{M}}

\newcommand{\oo}{\mathcal{O}}

\newcommand{\ee}{\mathbf{Ext}}

\numberwithin{equation}{thm}

\begin{document}

\maketitle

\begin{abstract}
Let $\A$ be a separable nuclear C*-algebra, and $\B$ be a nonunital separable simple
$\Z$-stable C*-algebra.
Continuing the work from \cite{GabeLinNg}, we classify all essential extensions,
with large complement, of the form 
$$0 \rightarrow \B \rightarrow \E \rightarrow \A \rightarrow 0,$$
for the following cases:
i. $\C(\B)$ is properly infinite, and the extension is full.
ii. $\C(\B)$ is purely infinite (though not necessarily simple). 
We also have some more general results. 
\end{abstract}

\section{Introduction}

In their groundbreaking work, Brown, Douglas and Fillmore (BDF) classified all essentially normal operators, on a separable
infinite dimensional Hilbert space, using K theory  (\cite{BDF1}).  Their classification amounted to the classification
of all essential extensions of the form
$$0 \rightarrow \K \rightarrow \E \rightarrow C(X) \rightarrow 0,$$
where $\K$ is the C*-algebra of compact operators on a separable Hilbert space $l_2$, and $X$ is a compact
subset of the plane.   This was a starting point for many developments in operator theory and noncommutative topology, 
with the introduction
of many powerful tools like KK theory. (Some examples of the history and
 broader context can be found in 
\cite{BDF1}, \cite{BDF2}, \cite{DouglasWeylLectures}, \cite{VoiculescuWvN}, \cite{ArvesonExt}, \cite{KasparovAbsorbing}, \cite{KasparovKK},
\cite{RosenbergSchochet},
\cite{LinStableAUE}, \cite{HadwinMaShen}, etc.)

The BDF--Kasparov theory classifies absorbing extensions.  (Under a nuclearity hypothesis, Kasparov's $KK^1$ computes
the unitary equivalence classes of absorbing extensions (\cite{KasparovKK});  and the original BDF theorem (\cite{BDF1})
can be viewed as a
piece of the Universal Coefficient Theorem (\cite{RosenbergSchochet}) for $KK^1$).    In fact, absorbing 
extensions are at the heart of the classical extension theory (e.g., \cite{BDF1}, \cite{ArvesonExt}, \cite{VoiculescuWvN},
\cite{KasparovAbsorbing}, \cite{KasparovKK}, \cite{ThomsenAbsorbing}).
The key problem is that the class of absorbing extensions can be a very thin class.  Among other things, the BDF--Kasparov
theory only classifies extensions of stable C*-algebras;  and also,  every absorbing extension
$\phi : \A \rightarrow \Mul(\B \otimes \K)/(\B \otimes \K)$ is \emph{full} -- i.e., for all $a \in \A - \{ 0 \}$, 
$Ideal(\phi(a)) = \Mul(\B \otimes \K)/(\B \otimes \K)$.   Since for a general C*-algebra $\B$, 
$\Mul(\B \otimes \K)/(\B \otimes \K)$ can have many (even infinitely many) ideals,  $KK^1$ misses a lot of 
essential extensions.

Perhaps one reason for the smoothness of the original BDF theory (\cite{BDF1}) is that
 their corona algebra, which is the
Calkin algebra $\mathbb{B}(l_2)/\K$, is simple purely infinite.  Among other things, the BDF--Voiculescu theorem, 
which roughly speaking says that ``all essential extensions of $\K$ are absorbing", would not be true if 
 $\mathbb{B}(l_2)/\K$ were not simple.   Thus, a reasonable approach in generalizing  BDF theory to more general contexts would be to first single out
 classes of corona algebras with ``nice properties", and then study extension theory in 
this new 
``nice" setting.  This idea was already present in multiple early works (e.g., \cite{ElliottHandelman},
\cite{LinExtII}), but the early definitive results were due to Lin (e.g., \cite{LinExtII}).
Lin characterized the class of simple purely infinite corona algebras (\cite{LinContScale}):  Let $\B$ be a $\sigma$-unital,
simple, nonunital C*-algebra.  Then $\Mul(\B)/\B$ is simple if and only if $\Mul(\B)/\B$ is simple purely infinite
if and only if either $\B \cong \K$ or $\B$ has continuous scale.  (Notice that in this characterization, $\B$ need not be
stable.)  We also note that for quite general extensions $\phi : \A \rightarrow \C(\D)$ (for quite general $\A$ and $\D$),
$\phi$ often has a ``piece" which lives inside a minimal ideal of $\C(\D)$, and this ``piece" will often be an extension of 
a  continuous scale C*-algebra (e.g., see \cite{LinExtIII}).

Generalizing previous works, \cite{GabeLinNg} gave the following classification result:  Let $\A$ be a separable 
nuclear C*-algebra and let $\B$ be a nonunital $\sigma$-unital simple continuous scale C*-algebra.  Then the 
map $Ext(\A, \B) \rightarrow KK(\A, \Mul(\B)/\B) : [\phi]_{Ext} \mapsto [\phi]_{KK}$ is a group isomorphism.  Here,
$Ext(\A, \B)$ consists of the unitary equivalence classes of nonunital essential extensions $\A \rightarrow \Mul(\B)/\B$.

This paper generalizes the result in \cite{GabeLinNg} in two directions:   
i.  Replace $\B$ by a simple nonunital $\Z$-stable C*-algebra for with $\Mul(\B)/\B$ is properly infinite, and restrict
to full nonunital extensions.  ii. Replace $\B$  by a simple nonunital $\Z$-stable C*-algebra for with $\Mul(\B)/\B$ is purely
infinite (though not necessarily simple -- so nonfull extensions are also classified).
There are also some more general results.
This paper uses many techniques and ideas from classical extension theory, the paper 
\cite{GabeLinNg}, as well
as the Elliott program for classifying nuclear C*-algebras using 
K theory invariants. 

The first author thanks J. Gabe for pointing out an early version of Theorem  
\ref{thm:ClassifyFullCase}.  

\section{Some preliminaries}
\label{sect:SomePreliminaries}

\begin{para}
Some references, for basic extension theory, are \cite{BlackadarKTh}, \cite{JensenThomsen} and
\cite{WeggeOlsen}. 
For a nonunital C*-algebra $\B$, we let $\Mul(\B)$ and $\C(\B) =_{df}
\Mul(\B)/\B$ denote the multiplier algebra and corona algebra of $\B$
respectively. Recall that for any C*-algebra $\A$, there is, up
to strong isomorphism, a 
one-to-one correspondence between extensions of 
$\B$ by $\A$ (i.e., extensions   
$0 \rightarrow \B \rightarrow \E \rightarrow \A \rightarrow 0$) and
*-homomorphisms $\phi : A \rightarrow \C(\B)$ (the 
\emph{Busby invariant} of the extension); e.g., see \cite{BlackadarKTh} 15.2, 15.3, 15.4.
Recall that an extension is essential 
if and only if the corresponding Busby invariant is
injective.   
Since we are only interested in properties of an extension that are invariant
under strong isomorphism,  
following standard convention, 
we identify a *-homomorphism $\A \rightarrow \C(\B)$
with the corresponding extension.   
\end{para}
\begin{para} \label{para:DfUE} 
Let $\phi, \psi : \A \rightarrow \C(\B)$ be extensions.
Recall that $\phi$ and $\psi$ are said to be
\emph{weakly equivalent} 
if there exists a partial isometry $v \in \C(\B)$ 
with $v^*v \phi(a) = \phi(a)$ for all $a \in \A$
such that
$v \phi(\cdot) v^* = \psi(\cdot).$
If we can additionally find a unitary 
$U \in \Mul(\B)$ for which $v = \pi(U)$, then we say that
$\phi$ and $\psi$ are \emph{unitarily equivalent}.
Moreover, we let $[\phi]$ or $[\phi]_{Ext}$ denote the 
unitary equivalence class of $\phi$.  
Next, $\phi$ and $\psi$ are said to be \emph{asymptotically 
	unitarily equivalent}  if there exists a norm-continuous path
$\{ u_t \}_{t \in [1, \infty)}$, of unitaries in $\C(\B)$ such
that for all $a \in \A$,
$u_t \phi(a) u_t^* \rightarrow \psi(a).$ 
Also, $\phi$ is said to have \emph{large complement} if 
there exists a projection $p \in \phi(\A)^{\perp}$ for which
$1_{\C(\B)} \preceq p$.  
\end{para}
\begin{para} \label{para:BDFSum} 
Suppose, in addition, that $\C(\B)$ is properly infinite.
The \emph{Brown--Douglas--Fillmore sum (BDF sum)} of two extensions
$\phi, \psi : \A \rightarrow \C(\B)$ is given by 
$(\phi \oplus \psi)(\cdot) =_{df} 
S^* \phi(\cdot) S + T^* \psi(\cdot) T,$ where $S, T \in \C(\B)$
are isometries for which $S S^* + T T^* \leq 1_{\C(\B)}$.   
The BDF sum is well-defined up to weak equivalence.  
When $\C(\B)$ is also $K_1$-injective and 
both $\phi$ and $\psi$ have large complement, then the BDF sum
is well-defined up to unitary equivalence (see \cite{GabeLinNg}       
Proposition 3.7 or \cite{ChandNgSutradhar} Lemma 2.2 for a more
elementary proof).    
\end{para}
\section{Classifying extensions:  The full case}

\begin{para}\label{para:CuntzSubequiv}
We fix a notation: If $\D$ is a C*-algebra and $a, b \in \D_+$, 
$a \preceq b$ if there exists a sequence $\{ x_n \}$ in $\D$ such
that $x_n b x_n^* \rightarrow a$ in norm.  
\end{para}
\begin{para} \label{para:fpi}
Let $\A, \B$ be C*-algebras with $\B$ nonunital.  Let 
$\phi : \A \rightarrow \C(\B)$ be an extension.
$\phi$ is said to be \emph{full} if for
all $a \in \A - \{ 0 \}$, $\phi(a)$ is full in $\C(\B)$ (i.e., 
$Ideal(\phi(a)) = \C(\B)$).  
$\phi$ is said to be \emph{purely infinite} if for all
$a \in \A_+ - \{ 0 \}$, $\phi(a) \oplus \phi(a) \preceq \phi(a)$
in $\M_2(\C(\B))$.  
It is an exercise to show that $\phi$ is both full and purely infinite
if and only if for all $a \in \A_+ - \{ 0 \}$, there exists an
$x \in \C(\B)$ for which 
\begin{equation} \label{equ:fpiCharacterization}
	x \phi(a) x^* = 1_{\C(\B)}. \end{equation}
\end{para}
\begin{df} \label{df:Extfpi} 
	Let $\A$ and $\B$ be C*-algebras with $\B$ being nonunital.

	We let $\ee_{fpi}(\A, \B)$ denote the set of all unitary 
	equivalence classes of full, purely infinite extensions
	$\phi : \A \rightarrow \C(\B)$ with large complement.
\end{df} 

\begin{para} \label{para:ExtNonempty}
Suppose, in addition, that we assume that $\C(\B)$ is properly infinite.
Then $\C(\B)$ contains a unital copy of $O_{\infty}$, and hence, a full copy of 
$O_2$.  If also, $\A$ is separable and nuclear, then by \cite{KirchbergPhillips},
$\ee_{fpi}(\A, \B)$ is nonempty.
\end{para}

\begin{para} \label{para:ExtfpiBDFSUM}
Again, suppose, in addition, that
we assume that $\C(\B)$ is properly infinite. 
From paragraph \ref{para:BDFSum}, the BDF sum induces a 
well-defined addition on $\ee_{fpi}(\A, \B)$ which is 
given by $[\phi] + [\psi]=_{df} [\phi \oplus \psi]$, which
we also call the \emph{BDF sum}.   
Moreover, with even more (relatively mild and reasonable)
hypotheses, we get group structure.
\end{para}
\begin{thm} \label{thm:ExtfpiIsGroup}
	Let $\A$ be a separable nuclear C*-algebra, and let $\B$
	be a nonunital $\sigma$-unital simple C*-algebra for which
	$\C(\B)$ is properly infinite and $K_1$-injective.  
	
	Then $\ee_{fpi}(\A, \B)$, with the addition operation given
	in the previous paragraph (BDF sum), is an abelian group.
\end{thm}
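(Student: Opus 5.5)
We need to show that $\ee_{fpi}(\A,\B)$ is closed under the BDF sum, that the sum is associative and commutative, that it has a zero element, and that every element has an inverse.

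\emph{Closure and well-definedness.} By paragraph \ref{para:BDFSum} (with $K_1$-injectivity and large complement), the class $[\phi\oplus\psi]$ does not depend on the choice of isometries $S,T$ or of representatives. We must still check that $\phi\oplus\psi$ is full, purely infinite, and has large complement.
\begin{itemize}
\item For fullness and pure infiniteness, use characterization (\ref{equ:fpiCharacterization}). Given $a\in\A_+-\{0\}$, pick $x$ with $x\phi(a)x^*=1$. Then $(xS)(\phi\oplus\psi)(a)(xS)^* = x\phi(a)x^*=1$, since $SS^*\perp TT^*$.
\item For large complement, the projection $1-SS^*-TT^*$ is orthogonal to the range. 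We may choose $S,T$ so that $1\preceq 1-SS^*-TT^*$, e.g.\ from a unital copy of $O_\infty$ in $\C(\B)$, which exists because $\C(\B)$ is properly infinite. Well-definedness lets us make this choice.
\end{itemize}

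\emph{Associativity and commutativity.} For commutativity, $S^*\phi S+T^*\psi T$ is literally a BDF sum of $\psi$ and $\phi$ with the roles of $S$ and $T$ swapped, so it is the same class by well-definedness. For associativity, both iterated sums are unitarily equivalent to $S_1^*\phi S_1+S_2^*\psi S_2+S_3^*\chi S_3$ for suitable isometries with orthogonal ranges (products of isometries). Again well-definedness (the three-term analogue of paragraph \ref{para:BDFSum}) identifies these.

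\emph{Zero element.} Let $\sigma:\A\to\C(\B)$ be a full purely infinite extension with large complement that factors through a unital full copy of $O_2\subseteq\C(\B)$, as in paragraph \ref{para:ExtNonempty}. Concretely, $\sigma$ can be taken to be an $O_2$-absorbing map: $\A\to O_2\to \C(\B)$ via Kirchberg's embedding, cut down to a corner of large complement. The claim is that $[\phi]+[\sigma]=[\phi]$ for every $[\phi]$. This is an absorption statement. Because $\phi$ is full and purely infinite, every other full map of the form $\A\to O_2\subset \C(\B)$ is weakly contained in $\phi$, by a Kirchberg--Phillips/Kasparov type absorption for full purely infinite maps from a nuclear $\A$ (approximately inner, upgraded to exact via an $O_\infty\otimes O_2$ trick). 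Large complement and $K_1$-injectivity then upgrade weak equivalence to unitary equivalence, as in \cite{GabeLinNg}. I expect this absorption step to be the main obstacle. The fallback plan is to cite the analogous argument from \cite{GabeLinNg} (the continuous scale case), whose proof uses only the fpi property of $\phi$ rather than simplicity of $\C(\B)$.

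\emph{Inverses.} Given $\phi$, nuclearity of $\A$ allows a Stinespring dilation inside $\C(\B)$, using the Kasparov--Stinespring argument together with proper infiniteness to realize $M_2$-dilations. Write $\phi$ as the corner $P\rho(\cdot)P$ of a representation $\rho:\A\to\C(\B)$ that factors through a trivial (zero-class) extension. Here $\rho$ is obtained by taking a completely positive lift of $\phi$ to $\Mul(\B)$ and dilating it. Then $\psi=(1-P)\rho(\cdot)(1-P)$, cut down to have large complement and summed with $\sigma$ to ensure it is full and purely infinite, satisfies
\[
\phi\oplus\psi\sim\rho\oplus\sigma\sim\sigma .
\]
The first equivalence holds because $P\rho P+(1-P)\rho(1-P)$ is homotopic to $\rho$ via the standard rotation argument, using $K_1$-injectivity to choose a unitary path. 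The second is the absorption from the zero-element step. The delicate point is ensuring that $P$ essentially commutes with $\rho$ in the corona; this is arranged by the Arveson/Voiculescu-style quasicentral trick for liftings.
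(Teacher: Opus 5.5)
The paper gives no argument for this theorem; it cites \cite{ChandNgSutradhar} Theorem 2.14, so your proposal has to stand on its own. Your closure, commutativity and associativity steps are fine. One slip: with the sum written correctly as $S\phi(\cdot)S^*+T\psi(\cdot)T^*$, the element that works is $xS^*$, not $xS$.

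The zero-element step is only asserted. Weak containment $v\phi(\cdot)v^*=\sigma(\cdot)$, which is what \cite{ChandNgSutradhar} Theorem 2.9 provides, does not by itself give $\phi\oplus\sigma\sim\phi$. You would still have to:
\begin{enumerate}
\item split $\sigma$ off as a summand $q\phi(\cdot)q$ with $q=v^*v\in\phi(\A)'$, while keeping the complementary summand full and purely infinite (e.g.\ first split $\phi$ into two copies of itself, as in Proposition \ref{prop:fpiCommutant});
\item prove $\sigma\oplus\sigma\sim\sigma$;
\item run a swindle.
\end{enumerate}

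The genuine gap is the inverse step, which fails as designed, for two reasons.

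First, the Kasparov--Stinespring dilation of a c.p.\ lift $\A\to\Mul(\B)$ lives in $\Mul(\B\otimes\K)$. Pulling it back into $\Mul(\B)$ needs something like stability of $\B$, so that $H_\B$ fits inside $\B$. The hypotheses do not give this, since continuous-scale $\B$ are included, and proper infiniteness of $\C(\B)$ does not lift to $\Mul(\B)$. Dilating ``inside $\C(\B)$'' is vacuous, because $\phi$ is already multiplicative there, and it produces nothing liftable.

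Second, and more fundamentally, a trivial (liftable) extension need not be the zero class when $\B$ is not stable. Take $\A=\mathbb{C}$:
\begin{itemize}
\item unitary equivalence preserves $[\phi(1)]\in K_0(\C(\B))$;
\item the neutral element is idempotent, so its class is $0$;
\item a trivial $\rho=\pi\circ\tilde\rho$ has $[\rho(1)]$ in the image of $K_0(\Mul(\B))\to K_0(\C(\B))$.
\end{itemize}
That image vanishes for stable $\B$ but not in general: for suitable continuous-scale $\B$ there are projections $P\in\Mul(\B)$ whose trace lies outside $\tau_*(K_0(\B))$, so $[\pi(P)]\neq 0$. Moreover, your own absorption statement gives $\rho\oplus\sigma\sim\rho$, not $\rho\oplus\sigma\sim\sigma$. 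So all you obtain is $[\phi]+[\psi]=[\rho]$, with $[\rho]$ not known to be neutral.

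There are two ways to repair this.
\begin{enumerate}
\item Use Proposition \ref{prop:fpiCommutant}. Applied to $\phi^+$, it gives isometries $s_1,s_2,s_3,\dots$ generating a unital $O_{\infty}\subseteq\phi(\A)'$. Set $\psi=(1-s_1s_1^*-s_2s_2^*)\phi(\cdot)$. Since $s_3^*\psi(a)s_3=\phi(a)$, $\psi$ is full and purely infinite, and it has large complement. With $S=s_1$ and a suitable $T$, one gets exactly $\phi\oplus\psi=(1-s_2s_2^*)\phi(\cdot)$. Because $[1-s_2s_2^*]=0$ in $K_0(O_{\infty})$, its corner contains a unital $O_2$ commuting with $\phi$. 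This class is therefore $O_2$-absorbing, and a correctly completed absorption step identifies it with the neutral element.
\item Bypass direct constructions. The injectivity and surjectivity arguments of Theorem \ref{thm:ClassifyFullCase} never use the group structure; use your $\sigma$, whose $KK$-class vanishes, in place of $\phi_0$. Then $\Gamma$ is a bijective monoid homomorphism onto the group $KK(\A,\C(\B))$, which forces $\ee_{fpi}(\A,\B)$ to be a group.
\end{enumerate}
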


\begin{proof}
	This is \cite{ChandNgSutradhar} Theorem 2.14.   
\end{proof}

We give a (different) proof of
\cite{GabeLinNg} Proposition 4.3. 
The following proof is actually due to J. Gabe and H. Lin. 
Like the argument of \cite{GabeLinNg} Proposition 4.3, the present
argument is
a quick argument where the
result 
follows immediately from Phillips and Weaver's interesting paper \cite{PhillipsWeaver} Proposition 1.4.  
Any lengthiness is due to the notation.   (See also \cite{ManuilovThomsen}.)

\begin{lem} \label{lem:PhillipsWeaver1} (J. Gabe and H. Lin;  cf. \cite{GabeLinNg} Lemma 4.3,
	\cite{ManuilovThomsen}, and \cite{PhillipsWeaver} Proposition 1.4.)   
	Let $\D$ be a separable C*-algebra and $\E$ a nonunital
	$\sigma$-unital C*-algebra.  Two extensions $\phi, \psi
	: \D \rightarrow \C(\E)$ are asymptotically unitarily equivalent
	if and only if they are weakly unitarily equivalent. 
\end{lem}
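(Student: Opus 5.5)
Here ``weakly unitarily equivalent'' is read as: there is a unitary $u \in \C(\E)$ with $u\phi(\cdot)u^* = \psi(\cdot)$. The plan treats the two implications separately. The direction ``asymptotically unitarily equivalent $\Rightarrow$ weakly unitarily equivalent'' will be a saturation argument in the corona algebra. The reverse direction will be a path-lifting statement, which is where \cite{PhillipsWeaver} Proposition 1.4 enters.

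\emph{Asymptotic $\Rightarrow$ weak.} Suppose $\{u_t\}$ is a norm-continuous path of unitaries in $\C(\E)$ with $u_t\phi(a)u_t^* \rightarrow \psi(a)$ for all $a \in \D$.
\begin{enumerate}
\item Fix a dense sequence $\{a_n\}$ in $\D$, using separability of $\D$.
\item For each $N$ choose $t_N$ so large that $\|u_{t_N}\phi(a_n)u_{t_N}^* - \psi(a_n)\| < 1/N$ for all $n \leq N$.
\item The countable family of conditions
$$x^*x = 1, \quad xx^* = 1, \quad x\phi(a_n)x^* = \psi(a_n) \quad (n \in \mathbb{N})$$
is then approximately finitely satisfiable in $\C(\E)$, by $x = u_{t_N}$.
\item Since $\E$ is $\sigma$-unital, $\C(\E)$ is countably degree-$1$ saturated (Farah--Hart). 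Equivalently, one can run the Kasparov-type quasicentral approximate unit argument implicit in \cite{PhillipsWeaver} Proposition 1.4, which builds an exact solution in $\C(\E)$ as a strictly convergent block sum of lifts. Either way this yields a unitary $u$ satisfying all the conditions exactly.
\item By density and continuity, $u\phi(\cdot)u^* = \psi$.
\end{enumerate}

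\emph{Weak $\Rightarrow$ asymptotic.} Here we are given a single unitary $u \in \C(\E)$, and we must replace it by a norm-continuous path that implements the same conjugation on $\phi(\D)$ only asymptotically. The difficulty is that $u$ need not lie in the connected component of $1$ in $U(\C(\E))$.

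The plan is to apply \cite{PhillipsWeaver} Proposition 1.4 to the separable C*-algebra $C^*(\phi(\D), u) \subseteq \C(\E)$. I would use it in the following form: given a separable subset $S \subseteq \C(\E)$ and a unitary $u$, there is a norm-continuous path of unitaries $w_t$ with
$$\|w_t s - us\| \rightarrow 0 \quad \text{and} \quad \|s w_t^* - s u^*\| \rightarrow 0 \qquad \text{for all } s \in S.$$
If that result is instead phrased in terms of a separable subalgebra commuting asymptotically, then I would produce this form as follows.
\begin{enumerate}
\item Pick a lift $X \in \Mul(\E)$ of $u$.
\item Choose a continuous quasicentral approximate unit $\{e_t\}_{t \geq 1}$ for $\E$, relative to $X$ and to lifts of a dense sequence of $\phi(\D)$.
\item Interpolate between $u$ and $1$ using the images of $(1-e_t)$ and $e_t$; the corrections are driven by commutators that vanish as $t \rightarrow \infty$.
\end{enumerate}

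With such a path $w_t$ in hand, taking $s = \phi(a)$ gives
$$w_t\phi(a)w_t^* \approx u\phi(a)w_t^* \approx u\phi(a)u^* = \psi(a),$$
which is exactly asymptotic unitary equivalence.

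The main obstacle is this second direction: constructing a \emph{norm-continuous} path, rather than merely a sequence, while controlling the $K_1$-obstruction. The point to exploit is that the path only has to approximate $u$ on a separable set, not in norm, so no global connectedness of $u$ is needed. This is precisely the content of the Phillips--Weaver argument, and the rest of the proof is bookkeeping of the approximations.
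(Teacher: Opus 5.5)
\textbf{There is a genuine gap, in the direction asymptotic $\Rightarrow$ weak.} After step 2 you use only the sequence $u_{t_N}$. That is, you use approximate unitary equivalence, and the continuity of the path never enters. But in a corona, approximate unitary equivalence by unitaries does not imply weak unitary equivalence.

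Take $\E=\K$ and $\D=M_{2^\infty}$, let $\tau:\D\to\C(\K)$ be any unital essential extension, and let $\tau_0$ be a fixed trivial one. On each finite-dimensional $\D_n\subseteq\D$, both restrictions lift to unital representations in which every irreducible summand has infinite multiplicity. Hence $\tau|_{\D_n}=\mathrm{Ad}(\pi(W_n))\circ\tau_0|_{\D_n}$ for unitaries $W_n\in\mathbb{B}(l_2)$, so $\tau$ and $\tau_0$ are approximately unitarily equivalent. Yet there are uncountably many strong unitary equivalence classes of such $\tau$, because $KK^1(\D,\mathbb{C})\cong\mathrm{Ext}_{\mathbb{Z}}(\mathbb{Z}[1/2],\mathbb{Z})$ is uncountable. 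Only countably many of these classes are weakly unitarily equivalent to $\tau_0$: at most one for each Fredholm index of the implementing unitary. So the family of conditions in your step 3 can be approximately finitely satisfiable without being realized, and no saturation property of $\C(\E)$ can deliver step 4.

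This is consistent with the Farah--Hart theorem, which concerns degree-$1$ conditions only. The conditions $x^*x=1$, $xx^*=1$ and $x\phi(a_n)x^*=\psi(a_n)$ have degree $2$. Their degree-$1$ substitutes $x\phi(a_n)=\psi(a_n)x$, $\|x\|\le 1$ are already satisfied by $x=0$.

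The missing idea is to exploit the continuity of $t\mapsto u_t$. The paper regards the path as a unitary $\pi_Q(U)$ in $C_b([1,\infty),\C(\E))/C_0([1,\infty),\C(\E))$. There it conjugates the constant copy of $\phi$ exactly onto the constant copy of $\psi$. Phillips--Weaver Proposition 1.4 then gives a $*$-homomorphism $\Psi$ from the separable algebra generated by these elements into $\C(\E)$ that is the identity on the constants, and $\Psi(\pi_Q(U))$ is the required unitary. In a hands-on quasicentral approximate unit construction, continuity is exactly what lets you reparametrize the path so that adjacent blocks carry nearly equal unitaries, making the block sum unitary modulo $\E$. For an arbitrary sequence this gluing fails, as the example shows.

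\textbf{The converse direction is correct only because it is trivial.} The definition in paragraph \ref{para:DfUE} puts no condition on $u_1$. So if $u\phi(\cdot)u^*=\psi(\cdot)$, the constant path $u_t\equiv u$ already witnesses asymptotic unitary equivalence; this is why the paper proves only the other direction. There is no $K_1$-obstruction to overcome. Your stated form of Phillips--Weaver is satisfied by $w_t=u$. If you intended a path starting at $1$, it would be false for $u\notin U_0(\C(\E))$, since $u\in S$ forces $\|w_t-u\|\to 0$. Also, the images of $e_t\in\E$ in $\C(\E)$ are zero, so your proposed interpolation does nothing.
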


\begin{proof}   
	Since $\phi$ and $\psi$ are asymptotically unitarily equivalent,
	$ker(\phi) = ker(\psi)$, and so, for simplicity, let us assume
	that $\phi$ and $\psi$ are both injective. 
	
	Let $C_b([1,\infty), \C(\E))$ be the C*-algebra of bounded
	continuous functions from $[1,\infty)$ to $\C(\E)$, and let
	$C_0([1, \infty), \C(\E))$ be the ideal consisting of such functions
	which vanish at infinity.  
	Consider the quotient C*-algebra     
	$\mathcal{Q} =_{df} C_b([1, \infty), \C(\E))/ C_0([1, \infty), \C(\E)),$ and
	let $\pi_Q : C_b([1,\infty), \C(\E)) 
	\rightarrow \mathcal{Q}.$    
	be the quotient map.
	
	Since $\phi$ and $\psi$ are asymptotically unitarily equivalent, 
	let $\{ u_t \}_{t \in [1, \infty)}$ be a norm continuous path of unitaries
	in $\C(\E)$ which realizes this (i.e., $u_t \phi(\cdot) u_t^* \rightarrow 
	\psi(\cdot)$ in the pointwise-norm topology). 
	Let $U \in C_b([1, \infty), \C(\E))$ be the unitary given by $U(t) =_{df} u_t$
	for all $t \in [1, \infty)$.   
	
	Let $\iota_C : \C(\E) \rightarrow \mathcal{Q}$ be the injective *-homomorphism 
	given by taking the
	constant maps in the obvious way.  I.e., 
	for all $x \in \C(\E)$,
	$\iota_C(x) =_{df} \pi_Q(\widetilde{x})$, 
	where $\widetilde{x} \in C_b([1,\infty), \C(\E))$ is the constant function
	$\widetilde{x}(t) =_{df} x$ for all $t \in [1, \infty)$.
	
	Let $\B \subseteq \mathcal{Q}$ be the separable C*-subalgebra  
	given by $\B =_{df} C^*(\iota_C \circ \phi(\D), \iota_C \circ \psi(\D), \pi_Q(U)),$
	and let $\A \subseteq \B$ be the separable C*-subalgebra given by
	$\A =_{df} C^*(\iota_C \circ \phi(\D), \iota_C \circ \psi(\D)) = 
	\iota_C(\A_0)$ 
	where $\A_0 \subseteq \C(\E)$ is the C*-subalgebra given 
	by 
	$\A_0 =_{df} C^*(\phi(\D), \psi(\D)).$
	In fact, $\iota_C$ restricts to a *-isomorphism from $\A_0$ onto $\A$. 
	Hence, $\iota_C^{-1}$ is well-defined on $\A$.
	
	Now for the inclusion map $\iota_B : \B \hookrightarrow \mathcal{Q}$,
	the restriction $\iota_B |_{\A}$
	factors through the map $\rho : \A \rightarrow \C(\E)$ 
	where    
	$\rho =_{df} \iota_{\A_0} \circ \iota_C^{-1} : 
	\A \rightarrow \A_0  \rightarrow \C(\E),$
	and where $\iota_{\A_0} : \A_0 \hookrightarrow \C(\E)$
	is the inclusion map.    
	(In fact, $\iota_B |_{\A} = \iota_C \circ \rho$.)
	
	Hence, by \cite{PhillipsWeaver} Proposition 1.4, there exists a
	*-homomorphism $\Psi : \B \rightarrow \C(\E)$ for which
	\begin{equation} \label{equ:Sept32024} \Psi |_{\A} = \rho. \end{equation}
	
	Note that in $\B$, we have that for all $d \in \D$,
	$\pi_C(U) (\iota_C\circ \phi(d)) \pi_C(U)^* = \psi(d).$  
	Hence, by (\ref{equ:Sept32024}) and the definitions of $\rho$
	and $\Psi$, in $\C(\E)$,   
	there exists a unitary $w \in \C(\E)$ for which
	$$w \phi(d) w^* = \psi(d) \makebox{  for all  } d \in \D.$$
	\end{proof}

\begin{prop} \label{prop:fpiCommutant}
	Let $\B$ be a nonunital simple $\sigma$-unital C*-algebra such that
	$C(\B)$ is properly infinite.  Let $\A$ be a unital
	separable nuclear C*-algebra
	and
	$\phi: \A \rightarrow C(\B)$ be a unital *-monomomorphism
	which is full and properly infinite; i.e.,
	for all $a \in \A_+ - \{ 0 \}$, there exists an $x \in C(\B)$ for which
	\begin{equation} x \phi(a) x^* = 1_{C(\B)}.
		\label{equ:Feb1020216AM}
	\end{equation}  
	\noindent(See \ref{para:fpi}.)

	Then $\phi(\A)'$ is properly infinite. 
	\label{prop:OinftyStable}
\end{prop}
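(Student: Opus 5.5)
The plan is to show that $\phi$ absorbs a copy of itself up to approximate unitary equivalence inside $\C(\B)$. This gives isometries with orthogonal ranges that \emph{approximately} commute with $\phi(\A)$. A saturation property of corona algebras then upgrades them to isometries that \emph{exactly} commute with $\phi(\A)$. Recall that a unital C*-algebra is properly infinite if and only if it contains isometries $V_1, V_2$ with $V_1V_1^* + V_2V_2^* \leq 1$; so it suffices to produce such $V_1, V_2$ in $\phi(\A)'$.

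\emph{Step 1 (a unital double of $\phi$).} Since $\C(\B)$ is properly infinite, it contains a unital copy of $O_\infty$, and hence isometries $S, T$ with $SS^* + TT^* \leq 1$. Put $P =_{df} 1 - SS^* - TT^*$ and choose any unital map $\theta$ into the corner $P\C(\B)P$; for instance $\theta(a) = \lambda(a) P$ for a character $\lambda$, or, if $\A$ has no character, a map built using a unital copy of $O_2$ in $P\C(\B)P$ and \cite{KirchbergPhillips}. Define
$$\psi(a) =_{df} S\phi(a)S^* + T\phi(a)T^* + \theta(a).$$
Then $\psi$ is a unital $*$-homomorphism. It is again full and purely infinite in the sense of (\ref{equ:Feb1020216AM}): if $x\phi(a)x^* = 1$, then $(xS^*)\psi(a)(Sx^*) = 1$.

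\emph{Step 2 (approximate unitary equivalence; expected main obstacle).} I would show that $\phi$ and $\psi$ are approximately unitarily equivalent in $\C(\B)$. The intended tool is Kirchberg's uniqueness/absorption theorem for unital, full, properly infinite maps from a separable nuclear unital algebra into a unital properly infinite algebra: any two such maps that are ``$O_\infty$-full'' in the sense of (\ref{equ:Feb1020216AM}) are approximately unitarily equivalent. Equivalently, one can use an Elliott--Kucerovsky type argument, as in \cite{KirchbergPhillips}. Condition (\ref{equ:Feb1020216AM}) is exactly the hypothesis needed to run the Stinespring/Voiculescu-type compression argument. Nuclearity of $\A$ lets us approximate the completely positive maps involved by finite-dimensional factorizations, and each factorization can be implemented through $x\phi(a)x^* = 1$. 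If a clean citation is unavailable, I would carry out this argument directly. I expect this to be the main technical step.

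\emph{Step 3 (approximately central isometries).} Fix a finite set $F \subseteq \A$ and $\epsilon > 0$. Step 2 gives a unitary $u$ with $\|u\phi(a)u^* - \psi(a)\| < \epsilon$ for all $a \in F$. Set $V_1 = u^*S$ and $V_2 = u^*T$. These are isometries with $V_1V_1^* + V_2V_2^* \leq 1$, and
$$V_1\phi(a) = u^*S\phi(a) = u^*\psi(a)S \approx \phi(a)u^*S = \phi(a)V_1,$$
using $\psi(a)S = S\phi(a)$; the same estimate holds for $V_2$. Hence the commutators $[V_i, \phi(a)]$ are within $2\epsilon$ of zero for $a \in F$.

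\emph{Step 4 (exact commutation).} Since $\B$ is $\sigma$-unital, $\C(\B)$ is countably degree-$1$ saturated (Farah--Hart); this can alternatively be deduced from Kasparov's technical theorem. The conditions ``$V_i^*V_i = 1$, $V_i^*V_j = 0$ for $i \neq j$, and $[V_i, \phi(a_n)] = 0$'' for a dense sequence $\{a_n\}$ in $\A$ form a countable family of degree-$1$ conditions in $V_1, V_2$. By Step 3 this family is approximately finitely satisfiable, so saturation yields exact isometries $V_1, V_2 \in \phi(\A)'$ with $V_1^*V_2 = 0$. Orthogonal ranges give $V_1V_1^* + V_2V_2^* \leq 1$, so $\phi(\A)'$ is properly infinite.
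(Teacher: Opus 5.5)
There are two genuine gaps, in Steps 2 and 4.

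\textbf{Step 2 is false as stated.} Fullness together with (\ref{equ:Feb1020216AM}) does not control $KK$-data, but approximate unitary equivalence preserves it. Here is a counterexample. Take $\B=\K$, $\A=C(\mathbb{T})$, and let $\phi$ be the Toeplitz extension, so $\phi(z)$ is the image of the unilateral shift. Condition (\ref{equ:Feb1020216AM}) holds because the Calkin algebra is simple purely infinite. Take $\theta(f)=f(1)P$. Then $\psi(z)$ is the product of the commuting unitaries $S\phi(z)S^*+1-SS^*$ and $T\phi(z)T^*+1-TT^*$, each of which has the same $K_1$-class as $\phi(z)$. Hence $\psi(z)$ has Fredholm index $-2$, while $\phi(z)$ has index $-1$, so no unitary $u$ gives $\|u\phi(z)u^*-\psi(z)\|<1$. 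A two-sided statement would need $[\theta]_{KK}=-[\phi]_{KK}$ plus a $KK$-uniqueness theorem. The relevant one (\cite{GabeOinfty} Theorem B, as used in Theorem \ref{thm:ClassifyFullCase}) assumes strong $O_\infty$-stability, which the paper derives from the very proposition you are proving. Step 3 only needs a one-sided, $KK$-blind statement: isometries $w$ with $w^*\phi(\cdot)w\approx\psi$ on finite sets. Then $wS$ and $wT$ are approximately central; expand $X^*X$ for $X=\phi(a)wS-wS\phi(a)$ and use $S^*\psi(\cdot)S=\phi$. Also, the $O_2$ option in Step 1 requires $[P]=-[1_{\C(\B)}]=0$ in $K_0$, which need not hold.

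\textbf{Step 4 is the more serious gap}, because passing from approximately central to exactly central isometries is the real content of the proposition. Farah--Hart's countable degree-1 saturation concerns conditions $\|P(\bar x)\|=r$ with $P$ a $*$-polynomial of degree at most one in the unknowns. Your commutator conditions qualify, but $V_i^*V_i=1$ and $V_1^*V_2=0$ have degree two, so the theorem does not apply. Gluing approximate solutions directly along a quasicentral approximate unit with $\sum_k f_k^2=1$ also fails. For $W=\sum_k \tilde V_k f_k$, the product $W^*W$ picks up cross terms roughly of the form $f_kf_{k+1}\tilde V_k^*\tilde V_{k+1}$. These do not vanish, because successive approximate solutions are unrelated.

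The paper avoids both problems. \cite{ChandNgSutradhar} Theorem 2.9 gives an \emph{exact} one-sided domination $v(\phi(a)\oplus 0)v^*=\phi(a)\oplus\phi(a)$ with $vv^*=1\oplus 1$ and $v^*v\leq 1\oplus 0$. Multiplicativity of both sides forces $v^*v$ to commute with $\phi(\A)\oplus 0$. Hence $v$ commutes with $\phi(a)\oplus\phi(a)$, so $v\in\M_2(\phi(\A)')$ and witnesses $1\oplus 1\preceq 1\oplus 0$ there. Domination sees no $KK$-obstruction, and exactness is supplied by the corona theorem rather than by a saturation argument.
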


\begin{proof}
	
	Consider the direct sum (or ``diagonal sum")
 $\rho =_{df} \phi \oplus 0$  
	which maps $\A$ into $\M_2 \otimes C(\B)$.
	Since $\C(\B)$ is properly infinite, the map
	$\rho$ 
	still satisfies (\ref{equ:Feb1020216AM}), as a map into
	$\M_2 \otimes \C(\B)$ (though $\rho$ is not unital).
	Hence, by \cite{ChandNgSutradhar} Theorem 2.9, 
	there exists a $v \in \M_2 \otimes C(\B)$ such that 
	\begin{equation}
		v \rho(a) v^* = \phi(a) \oplus \phi(a)
\makebox{ in } \M_2 \otimes \C(\B)		
\label{equ:June29202010AM}
	\end{equation} 
	for all $a \in \A$, where $\oplus$ is the ``diagonal sum".

	Clearly, we may assume that 
	$v^*v \leq 1 \oplus 0 
		\makebox{ and }   
		vv^* = 1 \oplus 1.$   
	
	Since $\rho$ and $\phi \oplus \phi$ are both *-homomorphisms,
	it follows from (\ref{equ:June29202010AM}) that
	%\begin{eqnarray*} 
	\[
	v \rho(a) v^* v \rho(a) v^*
	=  (\phi(a) \oplus \phi(a))^2
	=   \phi(a^2) \oplus \phi(a^2)\\
	=  v \rho(a^2) v^*\\
	=  v \rho(a) \rho(a) v^* 
	\]
	%\end{eqnarray*}
	for all $a \in \A$.  In the above, all $\oplus$ are ``diagonal
sums".

	Hence, 
	$v^* v \rho(a) (1 - v^*v) \rho(a) v^*v = 0$ 
	for all $a \in \A$.
	Hence, the projection $v^*v$ commutes with every element of $\rho(\A)$.

	Therefore, in $\M_2 \otimes \C(\B)$,
	\begin{eqnarray*}
		(\phi(a) \oplus \phi(a)) v 
		& = & v (\phi(a) \oplus 0) v^* v 
		\makebox{  (from (\ref{equ:June29202010AM}) and the definition of $\rho$)}\\ 
		& = & v (\phi(a) \oplus 0)  \makebox{  (since }\rho(a) \makebox{  commutes
			with  } v^*v \makebox{)}\\
		& = & v(\phi(a) \oplus \phi(a))  \makebox{   (since  } v^*v \leq 1 \oplus 0
		\makebox{)}
	\end{eqnarray*}
	for all $a \in A$.
In the above, all $\oplus$ are ``diagonal sums".
	
	Therefore, $v \in \M_2(\phi(A)').$
	
	Therefore, $v$ witnesses that 
	$1 \oplus 1 \preceq 1 \oplus 0$
	in $\M_2(\phi(A)')$.
	
\end{proof}

\begin{para} \label{para:ForcedUnitization} For a C*-algebra $\D$, we let 
$\D^+$ denote the \emph{forced unitization} of $\D$. I.e., 
if $\D$ is nonunital, then $\D^+$ is the minimal unitization of $\D$; and
if $\D$ is unital, then $\D^+ =_{df} \D \oplus \mathbb{C}$.
\end{para}

We are now ready to
classify full, purely infinite extensions; i.e., 
we compute $\ee_{fpi}(\A, \B)$.  The argument is very similar
to \cite{GabeLinNg} Theorem 4.4, and we thank J. Gabe for pointing out
part of the argument to us.

\begin{thm} \label{thm:ClassifyFullCase}
	Let $\A$ be a separable, nuclear C*-algebra, and let $\B$ be 
	a nonunital, $\sigma$-unital, simple C*-algebra for which 
	$\C(\B)$ is properly infinite and $K_1$-injective. 
	
	Then the canonical homomorphism 
	$$\Gamma: \ee_{fpi}(\A, \B) \rightarrow KK(\A, \C(\B)) : [\phi]_{Ext} 
	\mapsto [\phi]_{KK}$$
	is a group isomorphism.
\end{thm}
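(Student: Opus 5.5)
We show that $\Gamma$ is a well-defined homomorphism, then that it is surjective, then that it is injective. The model is the argument of \cite{GabeLinNg} Theorem 4.4, with Kirchberg--Phillips type uniqueness replacing the continuous scale hypothesis.

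\textbf{Homomorphism.} Unitarily equivalent extensions $\phi,\psi$ give equal classes, since conjugation by $\pi(U)$ with $U\in\Mul(\B)$ unitary is homotopic to the identity at the level of $KK$; alternatively, inner automorphisms act trivially on $KK(\A,\C(\B))$ for any unitary. For additivity, write the BDF sum as $S^*\phi S+T^*\psi T$ with isometries $S,T$ satisfying $SS^*+TT^*\le 1$. In $KK$ this equals $[\phi]+[\psi]$, because a map $\phi$ and its conjugate by an isometry define the same $KK$ class (they differ by a corner embedding, which is a $KK$-equivalence onto a full corner). By Theorem \ref{thm:ExtfpiIsGroup}, $\ee_{fpi}(\A,\B)$ is a group, so $\Gamma$ is a group homomorphism.

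\textbf{Surjectivity.} Take $x\in KK(\A,\C(\B))$. Since $\C(\B)$ is properly infinite, it contains a unital copy of $O_\infty$. Hence $\C(\B)\otimes\K$ represents $KK(\A,\C(\B))$ through $\mathcal{O}_\infty$-stable absorbing representatives, via Kirchberg's results on $KK$ for separable nuclear $\A$. Concretely, I would represent $x$ as a difference $[\phi_1]-[\phi_0]$ of $*$-homomorphisms $\A\to\C(\B)$. These can be chosen full and purely infinite with large complement by adding a fixed full, purely infinite extension $\theta$ with $[\theta]_{KK}=0$ to both; such a $\theta$ exists by paragraph \ref{para:ExtNonempty}, factoring through $O_2$. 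Then $x=\Gamma([\phi_1\oplus\theta])-\Gamma([\phi_0\oplus\theta])$, which lies in the image because the image is a subgroup. The delicate point is that every $KK$ class is realized by homomorphisms into $\C(\B)$ itself rather than a matrix algebra over it. This should follow from proper infiniteness: $\M_n(\C(\B))$ embeds back into $\C(\B)$ via isometries, and by Kirchberg's theorem a Cuntz pair of homomorphisms represents every class.

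\textbf{Injectivity (main obstacle).} Suppose $[\phi]_{KK}=[\psi]_{KK}$ with $\phi,\psi$ full, purely infinite and with large complement. Passing to unitizations as in paragraph \ref{para:ForcedUnitization}, and using large complement, reduce to unital full, properly infinite maps into a corner. Proposition \ref{prop:fpiCommutant} then says that their commutants are properly infinite, so they absorb $O_\infty$ in the appropriate sense. Kirchberg's uniqueness theorem (full $O_\infty$-absorbing maps with equal $KK$ class are asymptotically unitarily equivalent, for separable nuclear domains) gives asymptotic unitary equivalence, perhaps after adding a common trivial summand. Lemma \ref{lem:PhillipsWeaver1} upgrades this to unitary equivalence by a unitary $w\in\C(\B)$. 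It remains to lift $w$ to a unitary of $\Mul(\B)$. Using large complement, modify $w$ on the complement projection so that $[w]_{K_1}=0$; $K_1$-injectivity then places $w$ in the connected component of the identity, and such unitaries lift. Hence $[\phi]_{Ext}=[\psi]_{Ext}$. I expect the hardest part to be this last step together with verifying the hypotheses of Kirchberg's uniqueness theorem. In particular, the $K_1$-correction needs care: one should absorb $w^*$ on the large complement, so that the modified unitary still implements the equivalence while having trivial $K_1$ class.
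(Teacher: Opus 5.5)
Your well-definedness and injectivity steps follow the paper's proof. The injectivity chain is the same: forced unitization, made full and purely infinite via the large complement; Proposition \ref{prop:fpiCommutant} for strong $O_\infty$-stability; the unital $O_\infty$ uniqueness theorem (\cite{GabeOinfty} Theorem B); and Lemma \ref{lem:PhillipsWeaver1}. Your final $K_1$-correction is what \cite{GabeLinNg} Proposition 3.7, cited by the paper, provides. Your sketch of it is correct: place a unitary of class $-[w]$ under a subprojection of the complement that is equivalent to $1_{\C(\B)}$. You should also record that $[\phi^+]_{KK}=[\psi^+]_{KK}$ in $KK(\A^+,\C(\B))$, which follows from split exactness of $0\to\A\to\A^+\to\mathbb{C}\to 0$. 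No extra summand is needed, though adding one would be harmless since $\ee_{fpi}(\A,\B)$ is a group.

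The genuine gap is in surjectivity. That step rests on each $x\in KK(\A,\C(\B))$ being $[\phi_1]_{KK}$, or $[\phi_1]_{KK}-[\phi_0]_{KK}$, for *-homomorphisms $\phi_i:\A\to\C(\B)$. The mechanism you offer does not establish this. A Cuntz pair consists of maps into $\Mul(\C(\B)\otimes\K)$, neither of which defines a $KK$-class by itself, so there are no homomorphisms into $\M_n(\C(\B))$ to compress.

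More to the point, you treat as formal the claim that $x$ is realized by homomorphisms into some matrix algebra or stabilization, with proper infiniteness needed only to compress back into $\C(\B)$. That claim is false in general. For example, $KK(C_0(\mathbb{R}^2),\mathbb{C})\cong\mathbb{Z}$. Yet every *-homomorphism $C_0(\mathbb{R}^2)\to\K$ has the form $f\mapsto\sum_k f(x_k)q_k$ with orthogonal finite-rank projections $q_k$. Each such map is homotopic to $0$ by pushing the points $x_k$ to infinity, so no nonzero class is even a difference of such classes. Compression by isometries is harmless once a realization in $\C(\B)\otimes\K$ exists; the realization itself is the whole difficulty.

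What is needed is a genuine $O_\infty$ existence theorem, and this is what the paper invokes. \cite{GabeOinfty} Theorem A applies because $\C(\B)$ is unital and properly infinite, and it gives a single *-homomorphism $\phi_1:\A\to\C(\B)$ with $[\phi_1]_{KK}=x$. With that, your $\theta$-trick completes the argument, and the difference is unnecessary: $\phi_1\oplus\theta$ is already a preimage of $x$. This is exactly how the paper uses $\phi_0\oplus\phi_1$ with $[\phi_0]_{Ext}=0$.
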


\begin{proof}
	
	It is clear, from basic KK theory,
	that $\Gamma$ is a well-defined group homomorphism.  
	%(Note that in an abelian group, the zero element is the unique element
	%satisfying the identity $e + e = e$.)

	Firstly, we check that $\Gamma$ is surjective. 
	Let
	$x$ in $KK(\A,\C(\B))$ be arbitrary.
	By Theorem A of \cite{GabeOinfty}, there exists a 
	*-homomorphism $\phi_1 : \A\rightarrow \C(\B)$ such that $[\phi_1]_{KK}=x$. 
	By Theorem \ref{thm:ExtfpiIsGroup}, let $\phi_0 : \A \rightarrow \C(\B)$
	be a full, purely infinite extension, with large complement,
	such that $[\phi_0]_{Ext} = 0$ in 
	$\ee_{fpi}(\A, \B)$.  Then $\phi_0 \oplus \phi_1$ is a full purely infinite 
	extension with large complement, where $\oplus$ is the BDF sum (see
	\ref{para:BDFSum}; note that there exists $x \in \C(\B)$ such that 
	$x (\phi_0 \oplus \phi_1)x^* = 1_{\C(\B)}$, since a similar equation
	holds for $\phi_0$ -- see \ref{para:fpi} (\ref{equ:fpiCharacterization})).   
	So $[\phi_0 \oplus \phi_1]_{Ext} \in \ee_{fpi}(\A, \B)$, and
	$$\Gamma([\phi_0 \oplus \phi_1]_{Ext}) = \Gamma([\phi_0]_{Ext} 
	+ [\phi_1]_{Ext})
	= \Gamma([\phi_1]_{Ext}) = x.$$

	Secondly, we show that $\Gamma$ is injective.  
	Assume that $\phi,\psi: \A\rightarrow \C(\B)$ are full purely 
	infinite extensions with large complement such that  
	$\Gamma([\phi]_{Ext})=\Gamma([\psi]_{Ext})$; i.e., 
	$[\phi]_{KK}=[\psi]_{KK}$.

	Let $\A^+$ be the forced unitization of $\A$ (see
	\ref{para:ForcedUnitization}).
	Let $\phi^+, \psi^+ : \A^+ \rightarrow \C(\B)$ be the *-homomorphisms 
	which are the unique unital extensions of $\phi$ and $\psi$ respectively. 
	I.e.,  $\phi^+(1_{\A^+})=1_{\C(\B)}$, $\phi^+|_{\A}=\phi$,
	$\psi^+(1_{\A^+})=1_{\C(\B)}$, and $\psi^+|_{\A}=\psi$.    
	
	Next, let us prove that $\phi^+$ and $\psi^+$ both are full and purely
	infinite.  It suffices to prove that $\phi^+$ is full and purely infinite
	(since the proof for $\psi^+$ is exactly the same). 
	Let $a \in \A^+$ be a nonzero positive element.  If 
	$a \in \A$,  then, since $\phi^+ |_{A} = \phi$ and $\phi$ is full and
	purely infinite, we can find $x \in \C(\B)$ with $x \phi(a) x^* = 1_{\C(\B)}$  (see \ref{para:fpi} (\ref{equ:fpiCharacterization})); and so, $x \phi^+(a) x^*  = 1_{\C(\B)}$.
	Now suppose that $a = \alpha 1 + a_0$ where $\alpha > 0$ and $a_0 \in \A$.
	Then $\phi(a) = \alpha 1_{\C(\B)} + \phi(a_0)$. Since $\phi$ has   
	large complement, we can find a projection $p \in \phi(\A)^{\perp}$ such that
	$p \sim 1_{\C(\B)}$ (as a consequence, $p \perp \phi(a_0)$). Hence, let 
	$v \in \C(\B)$ be a partial isometry where $v^* v = p$ and $v v^* 
	= 1_{\C(\B)}$.  Hence, if we
	take $x =_{df} \frac{1}{\sqrt{\alpha}} v$, then
	$x \phi^+(a) x^* = 1_{\C(\B)}$.  Since $a \in (\A^+)_+ - \{ 0 \}$ was arbitrary,
	$\phi^+$ is full and purely infinite   (see \ref{para:fpi} (\ref{equ:fpiCharacterization})).    
	By the same argument, $\psi^+$ is full and purely infinite.

	By Proposition \ref{prop:fpiCommutant} and since properly infinite
	C*-algebras have a unital copy of $O_{\infty}$,  
	$\phi^+$ and $\psi^+$ both satisfy the property of being 
	\emph{strongly $O_{\infty}$-stable} in the sense of \cite{GabeOinfty} (1.1)
	(see also \cite{GabeOinfty} Definition 4.1 and Remark 4.2).
	Also, it is not hard to check (e.g., by looking at the six term exact
	sequence in KK for $0 \rightarrow 
	\A \rightarrow \A^+ \rightarrow \mathbb{C} \rightarrow 0$), that
	since $[\phi]_{KK} = [\psi]_{KK}$ in $KK(\A, \C(\B))$,
	$[\phi^+]_{KK} = [\psi^+]_{KK}$ in $KK(\A^+, \C(\B))$. 
	Hence, by \cite{GabeOinfty} Theorem B,  $\phi^+$ and $\psi^+$  
	are asymptotically unitarily equivalent. Hence, 
	$\phi$ and $\psi$ are asymptotically unitarily equivalent.  Hence, by 
	Lemma \ref{lem:PhillipsWeaver1} (see also \cite{GabeLinNg} Proposition 4.3),
	$\phi$ and $\psi$ are weakly unitarily equivalent.  Hence, by \cite{GabeLinNg}
	Proposition 3.7 (see also \cite{ChandNgSutradhar} Lemma 2.2 for a more 
	elementary proof),  
	$\phi$ and $\psi$ are unitarily equivalent; i.e., 
	$[\phi]_{Ext} = [\psi]_{Ext}$ in $\ee_{fpi}(\A, \B)$.\\  
\end{proof}

\begin{df}  \label{df:StrongCFP} 
	Let $\B$ be a $\sigma$-unital, 
	nonunital C*-algebra for which $\C(\B)$ is properly infinite.
	
	Then $\B$ is said to have the \emph{strong corona factorization property
		(strong CFP)} if for all $A \in \C(\B)_+$, if $A$ is full in  
	$\C(\B)$ (i.e., $Ideal(A) = \C(\B)$), then there exists an $X \in \C(\B)$
	such that 
	$$X A X^* = 1_{\C(\B)}.$$ 
\end{df}

\begin{para}
	The strong corona factorization property (strong CFP) 
	differs from the usual corona factorization property (CFP) 
	(\cite{KucerovskyNgCFPAUE};  \cite{NgCFPSurvey})   in that in 
	the original CFP, the canonical ideal $\B$ is assumed to be stable, and
	the positive element $A$ is assumed to have the property that every nonzero 
	element of $C^*(A)$ is full in $\C(\B)$.
	
	Recall that for a separable stable C*-algebra $\B$, $\B$ has the CFP
if and only if every norm-full extension of $\B$ is nuclearly absorbing (\cite{KucerovskyNgCFPAUE},
\cite{NgCFPSurvey});  and thus, 
the CFP characterizes a class of C*-algebras for which there is a concrete way to 
single out the extensions which can be classified by $KK^1$, as in the
BDF--Kasparov theory.	
We generalize this to our setting using the strong CFP (see Theorem 
\ref{thm:ClassifyWithStrongCFP}).

	Like the usual CFP, many C*-algebras have the strong CFP (e.g., see
Propositions \ref{prop:ZStableImpliesUnperf} and \ref{prop:AlmostUnperfImpliesCFP}). 
	Finally, it turns out that the CFP property is closely related to important 
	structural properties of the C*-algebras themselves, like for instance, the Dichotomy question
for simple real rank zero C*-algebras (e.g., see
	\cite{KucerovskyNgSRegular}, \cite{NgCFPSurvey}, \cite{OPRCFP}), and we wonder if this is also so
for the strong CFP.   
	 
\end{para}

\begin{para}\label{df:Extf}
Let $\A$ and $\B$ be C*-algebras with $\B$ nonunital.   We let
$\ee_f(\A, \B)$ denote the set of all unitary equivalence classes
of full extensions $\phi : \A \rightarrow \C(\B)$ with large complement.

If, in addition, $\C(\B)$ is properly infinite, then $\ee_f(\A, \B)$ has
a BDF sum as in \ref{para:BDFSum} for $\ee_{fpi}(\A, \B)$.\\
\end{para}

\begin{thm} \label{thm:ClassifyWithStrongCFP} 
	Let $\A$ be a separable, nuclear C*-algebra, and let $\B$ be a nonunital,
	$\sigma$-unital, simple C*-algebra with the strong CFP and for which $\C(\B)$ is
	properly infinite and $K_1$-injective.
	
	Then the canonical homomorphism
	$$\Gamma : \ee_f(\A, \B) \rightarrow KK(\A, \C(\B)) : [\phi]_{Ext} \mapsto [\phi]_{KK}$$
	is a group isomorphism.   
\end{thm}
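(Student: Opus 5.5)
The plan is to reduce Theorem \ref{thm:ClassifyWithStrongCFP} to Theorem \ref{thm:ClassifyFullCase} by showing that, under the strong CFP, every full extension is automatically purely infinite. Then $\ee_f(\A, \B) = \ee_{fpi}(\A, \B)$ as sets. The BDF sums on the two sets agree, since both are given by the formula in \ref{para:BDFSum}. The map $\Gamma$ is the same canonical map in both cases. Hence the conclusion follows directly from Theorem \ref{thm:ClassifyFullCase}, which already contains the group structure via Theorem \ref{thm:ExtfpiIsGroup}.

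The key step is the inclusion $\ee_f(\A,\B) \subseteq \ee_{fpi}(\A,\B)$; the reverse inclusion is trivial. Let $\phi : \A \rightarrow \C(\B)$ be a full extension with large complement, and let $a \in \A_+ - \{0\}$. Fullness of $\phi$ gives $Ideal(\phi(a)) = \C(\B)$, so $\phi(a)$ is a full positive element of $\C(\B)$. By Definition \ref{df:StrongCFP} there is an $X \in \C(\B)$ with $X \phi(a) X^* = 1_{\C(\B)}$. This is exactly the characterization (\ref{equ:fpiCharacterization}) in \ref{para:fpi}, so $\phi$ is full and purely infinite. I will also check the easy direction of that characterization to be safe. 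Given $x\phi(a)x^* = 1$ and $\C(\B)$ properly infinite, choose isometries $s_1, s_2$ with orthogonal ranges. Then $\phi(a) \oplus \phi(a) \preceq 1 \oplus 1 \preceq 1$, and $1 \preceq \phi(a)$ via $1 = x\phi(a)x^*$, which gives $\phi(a)\oplus\phi(a)\preceq \phi(a)$. Large complement is unchanged, so $[\phi] \in \ee_{fpi}(\A,\B)$.

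Next I check that the structures match. The unitary equivalence relation is identical on both sets. The BDF sum of two elements of $\ee_f$ is computed by the same formula $S^*\phi S + T^*\psi T$ as in $\ee_{fpi}$, and its well-definedness up to unitary equivalence uses only $K_1$-injectivity and large complement, as noted in \ref{para:BDFSum}. Therefore $\ee_f(\A,\B)$ with the BDF sum is literally the abelian group $\ee_{fpi}(\A,\B)$ of Theorem \ref{thm:ExtfpiIsGroup}. Applying Theorem \ref{thm:ClassifyFullCase} then shows that $\Gamma$ is a group isomorphism.

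The main obstacle is essentially nil; the only delicate points are bookkeeping. First, the equivalence in \ref{para:fpi} was stated as an exercise, and I use only the direction "$x\phi(a)x^*=1$ implies full and purely infinite", whose proof is sketched above. Second, the definition of $\ee_f$ must use the same notion of unitary equivalence and large complement as $\ee_{fpi}$, which it does by construction in \ref{df:Extf}.
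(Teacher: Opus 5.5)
Your proposal is correct and matches the paper's proof: the paper likewise notes that the strong CFP makes every full extension $\A \rightarrow \C(\B)$ purely infinite, so $\ee_f(\A,\B) = \ee_{fpi}(\A,\B)$, and then applies Theorem \ref{thm:ClassifyFullCase}. Your explicit check of the easy direction of (\ref{equ:fpiCharacterization}), and of the agreement of the BDF sums, is bookkeeping that the paper leaves implicit.
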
 

\begin{proof}
	This follows from Theorem \ref{thm:ClassifyFullCase} and the fact that
	since $\B$ has the strong CFP,  every full extension $\A \rightarrow \C(\B)$
	is automatically purely infinite (and hence, $\ee_{fpi}(\A, \B) =
	\ee_f(\A, \B)$).   
\end{proof}

\begin{para} Towards proving that like the usual CFP, many C*-algebras satisfy the strong CFP, we begin by recalling
some notation about the ``nonstabilized" version of the Cuntz semigroup. (See \cite{CuntzW} and \cite{RorZ}.)

Let $\B$ be a C*-algebra. 	
Recall that $\M_{\infty}(\B) =_{df} \bigcup_{n=1}^{\infty} \M_n(\B)$, where for all $n$, the embedding
$\M_n(\B) \hookrightarrow \M_{n+1}(\B) : a \mapsto diag(a, 0)$. 
For $a\in \M_k(\B)_+$ and $b\in \M_l(\B)_+$ set $a\oplus b=diag(a,b)\in \M_{k+l}(\B)_+$, and recall that $a\precsim b$ if there exists a sequence $\{x_m\}_{m=1}^{\infty}$ in $\M_{k+l}(\B)$ such that $x_m b x_m^*\rightarrow a$. Write $a\sim b$ if $a\precsim b$ and $b\precsim a$. Put $W(\B)=_{df} \M_{\infty}(\B)/\sim$ and let $[a]$ denote the $\sim$-equivalence class containing $a$. 

 Define an addition and a partial order on  $W(\B)$ as follows:  For all $a, b \in \M_{\infty}(\B)_+$,
 $[a]+[b] =_{df} [a\oplus b]$ and $[a]\leq [b] \Leftrightarrow a\precsim b$. Then $W(\B)$ is a partially ordered abelian semigroup. 
	
	We say that $W(\B)$ is \emph{almost unperforated} if for all $[a], [b] \in W(\B)$ and all $m,n\in \mathbb{N}$, with $n[a] \leq m[ b] $ and $n>m$, one has that $[a]\leq [b]$.
	
%	For any $a\in \B_{sa}$ and any $\epsilon>0$, let $(a-\epsilon)_+$ denote the positive element defined by the 
%functional calculus of $a$ by the function $f_{\epsilon}(t)=\max\{t-\epsilon, 0\}$.
\end{para}

We begin with a standard computation.

\begin{prop}\label{prop:AlmostUnperfImpliesCFP}
	Let $\B$ be a nonunital $\sigma$-unital C*-algebra for which $\C(\B)$ is properly infinite and $W(\C(\B))$ is almost 
unperforated. 

Then $\B$ has the strong CFP.
\end{prop}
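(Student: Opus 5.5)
Let $A \in \C(\B)_+$ be full in $\C(\B)$. We want an $X \in \C(\B)$ with $X A X^* = 1_{\C(\B)}$. The plan is to use fullness to dominate the unit by finitely many copies of $A$ in the Cuntz order. Then almost unperforation together with proper infiniteness of the unit lets us cancel the multiplicity. Finally, Cuntz subequivalence of the unit to $A$ is upgraded to an exact equation.

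\textbf{Step 1 (fullness gives $[1] \le n[A]$).} Since $Ideal(A) = \C(\B)$ and $\C(\B)$ is unital, $1_{\C(\B)}$ lies in the algebraic ideal generated by $A$. So there are $y_1, \dots, y_n \in \C(\B)$ with
\[
\Bigl\| 1 - \sum_{j=1}^n y_j A y_j^* \Bigr\| < 1/2 .
\]
Hence $\sum_j y_j A y_j^*$ is invertible, and $1 \preceq \sum_j y_j A y_j^*$. The standard inequality $\sum_j y_j A y_j^* \preceq A \oplus \cdots \oplus A$ ($n$ copies), obtained via the row matrix $(y_1, \dots, y_n)$, then gives $[1] \le n[A]$ in $W(\C(\B))$.

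\textbf{Step 2 (proper infiniteness).} Because $\C(\B)$ is properly infinite, $1 \oplus 1 \precsim 1$. Iterating, $m[1] \le [1]$ for every $m$; in particular $(n+1)[1] \le [1] \le n[A]$.

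\textbf{Step 3 (cancel the multiplicity).} Apply almost unperforation with $a = 1$, $b = A$, $n+1 > n$. This yields $[1] \le [A]$, that is, $1 \precsim A$ in $\C(\B)$. I am comparing elements of $\C(\B)$ itself, so the subequivalence can be taken inside $\C(\B)$ by cutting down with the corner $1 \oplus 0$.

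\textbf{Step 4 (upgrade to an equation).} From $1 \precsim A$ there is $z_m$ with $\|z_m A z_m^* - 1\| < 1/2$. Then $c = z_m A z_m^*$ is positive and invertible, and $X = c^{-1/2} z_m$ satisfies $X A X^* = 1_{\C(\B)}$. This is exactly the strong CFP condition of Definition \ref{df:StrongCFP}.

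The main point requiring care is Step 3: almost unperforation needs a strict inequality $n < n+1$ in the multiplicities. That is why Step 2 uses proper infiniteness to absorb the extra copy of $1$. Steps 1 and 4 are standard Cuntz-comparison manipulations.
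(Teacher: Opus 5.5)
Your proof is correct and follows the paper's argument essentially step for step. Fullness gives $[1_{\C(\B)}]\le n[A]$, proper infiniteness upgrades this to $(n+1)[1_{\C(\B)}]\le n[A]$, almost unperforation yields $1_{\C(\B)}\precsim A$, and a perturbation/functional-calculus step (your $X=c^{-1/2}z_m$) produces $XAX^*=1_{\C(\B)}$; your Step 1, which approximates first and then uses invertibility, is just a slightly more careful version of the paper's assertion that $1_{\C(\B)}=\sum_{n=1}^k X_nAX_n^*$.
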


\begin{proof}
	Let $A$ be a full element in $\C(\B)_+$.  Hence, let $\{X_n\}_{n=1}^{k}$ be in $\C(\B)$ such that $1_{\C(\B)}=\sum_{n=1}^{k} X_n A X_n^*$.
	Then $[ 1_{\C(\B)}] \leq k [ A ]$ in $W(\C(\B))$. Since $1_{\C(\B)}$ is properly infinite, $[1_{\C(\B)}]+[ 1_{\C(\B)}] \leq [ 1_{\C(\B)}]$. Then $(k+1)[ 1_{\C(\B)}] \leq k [ A]$ in $W(\C(\B))$.  Since  $W(\C(\B))$ is almost unperforated, it follows
 that $[1_{\C(\B)}] \leq [A]$ in $W(\C(\B))$. Then there exists a sequence $\{ Y_n \}$ in $\C(\B)$ such that 
$Y_n A Y_n^* \rightarrow  1_{\C(\B)}$ in the norm topology.  It follows,  from the continuous functional calculus, that we can 
find $X \in \C(\B)$ for which $XAX^*=1_{\C(\B)}$.  Since $A$ is arbitrary,  $\B$ has the strong CFP.
\end{proof}

The next result follows immediately from \cite{FarahSzabo} Theorem B.  However, since \cite{FarahSzabo}
has not yet been published, we briefly sketch a  proof.

Recall that for a nonunital  C*-algebra $\B$, $\B^{\sim}$ is the minimal unitization of $\D$,
\begin{prop} \label{prop:ZStableImpliesUnperf}
Let $\B$ be a nonunital separable $\Z$-stable C*-algebra.  

Then $\Mul(\B)$ and $\C(\B)$ are separably $\Z$-stable; i.e.,  if $\D \subset \Mul(\B)$ (or $\C(\B)$) is 
a separable C*-subalgebra,
then there is a separable C*-algebra $\D_1$ such that $\D_1$ is $\Z$-stable and $\D \subset \D_1 \subset \Mul(\B)$ ($\C(\B)$ resp.).

As a consequence, $W(\Mul(\B))$ and $W(\C(\B))$ are almost unperforated.

\end{prop}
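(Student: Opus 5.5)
The plan is to prove separable $\Z$-stability by a standard reindexing argument using the McDuff-type characterization of $\Z$-stability, and then to deduce almost unperforation via R{\o}rdam's theorem on $\Z$-stable algebras.

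\emph{Characterization used.} By Toms--Winter, a separable C*-algebra $\D$ is $\Z$-stable iff there is a unital *-homomorphism $\Z \rightarrow \D_\omega \cap \D'$, or more precisely into $F(\D) = (\D_\omega \cap \D')/Ann(\D)$. Since $\Z$ is the inductive limit of the dimension drop algebras $Z_{n,n+1}$, which are finitely presented (semiprojective) with generators and relations, it suffices to check an approximate condition. For each finite set $F \subset \D$, each $\epsilon>0$ and each $n$, we need a c.p.c.\ order zero map (equivalently, elements satisfying the $Z_{n,n+1}$ relations up to $\epsilon$) from $\M_n$ into $\D$ that almost commutes with $F$. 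We also need the ``complementary'' element $1 - \phi(1)$ to be dominated, in the approximate Cuntz sense and relative to $F$, by a partial isometry into $\phi(e_{11})$.

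\emph{Producing approximately central copies.} Step one is to produce such elements in $\Mul(\B)$ approximately commuting with any prescribed finite set of multipliers. Since $\B \cong \B\otimes\Z$, we use $\Mul(\B\otimes\Z)\supseteq \Mul(\B)\otimes 1$ and elements $1\otimes z$ with $z \in \Z$. These commute only with $\Mul(\B)\otimes 1$, not with all of $\Mul(\B\otimes \Z)$, so this is the main obstacle. The fix is to use the strict topology. Write $\B \cong \B \otimes \Z^{\otimes\infty}$, and choose an approximate unit $e_k$ of $\B$ that is quasicentral for the given separable $\D \subseteq \Mul(\B)$. For the $k$-th block $(e_{k}-e_{k-1})$ we use a copy of $\Z$ in a tail factor, chosen so that the corresponding central sequence in $\B$ almost commutes with the finitely many elements $e_j d e_l$. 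We then glue the blocks by a Kasparov-style quasicentral approximate unit / infinite-sum construction, $\sum_k f_k^{1/2} (1\otimes z_k) f_k^{1/2}$, which converges strictly. This gives elements of $\Mul(\B)$ satisfying the $Z_{n,n+1}$ relations up to $\epsilon$ (errors summable) and commuting with $F$ up to $\epsilon$. Alternatively, one can argue with the ultrapower $\Mul(\B)_\omega$ by using the Kirchberg $\epsilon$-test.

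\emph{Building $\D_1$ and passing to $\C(\B)$.} Step two is a standard intertwining argument. Set $\D^{(0)} = \D$. Given a separable $\D^{(m)}$, choose a dense sequence in it, and apply step one to finitely many elements at each stage with $\epsilon = 1/m$ and $n \le m$. Let $\D^{(m+1)}$ be generated by $\D^{(m)}$ and all the elements so produced. Then $\D_1 = \overline{\bigcup \D^{(m)}}$ is separable and admits approximately central unital maps from each $Z_{n,n+1}$, hence a unital map $\Z \rightarrow F(\D_1)$. Thus $\D_1$ is $\Z$-stable. For $\C(\B)$, we lift a separable $\D\subseteq\C(\B)$ to a separable subalgebra of $\Mul(\B)$, apply the result there, and take the image; quotients of $\Z$-stable algebras are $\Z$-stable.

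\emph{Almost unperforation.} For the consequence, take $a,b \in \M_k(\C(\B))_+$ with $n[a]\le m[b]$ and $n>m$. For each $\epsilon>0$ the relation $(a-\epsilon)_+^{\oplus n} \precsim b^{\oplus m}$ is witnessed by finitely many elements, so we place $a$, $b$ and these witnesses into a separable $\D \subseteq \M_k(\C(\B)) \cong \C(\M_k(\B))$ (with $\M_k(\B)$ again $\Z$-stable). We enlarge $\D$ to a $\Z$-stable separable $\D_1$. By R{\o}rdam, $W(\D_1)$ is almost unperforated, so $(a-\epsilon)_+\precsim b$ in $\D_1$, hence in $\C(\B)$. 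Letting $\epsilon\to0$ gives $a \precsim b$. The same argument works for $\Mul(\B)$.
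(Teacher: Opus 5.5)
There is a genuine gap in your Step one, at the gluing. Write $\theta_k : \Z \rightarrow \Mul(\B)$ for the tail copy you place on the $k$-th block (your $1\otimes z_k$), and set $\Theta(x) = \sum_k f_k^{1/2}\theta_k(x) f_k^{1/2}$. Quasicentrality and the choice of the $\theta_k$ do make $\Theta$ almost commute with $F$. They do not make $\Theta$ approximately multiplicative or approximately order zero, because consecutive blocks overlap and carry \emph{different} copies. Grant that each $\theta_k$ almost commutes with $f_{k-1}, f_k, f_{k+1}$, and that $f_k f_l = 0$ for $|k-l|\geq 2$. Then
\[
\Theta(x)\Theta(y) \approx \sum_k f_k^2\, \theta_k(xy) + \sum_k f_k f_{k+1}\bigl(\theta_k(x)\theta_{k+1}(y) + \theta_{k+1}(x)\theta_k(y)\bigr).
\]
Take positive orthogonal $x, y$, e.g.\ the images of $e_{11}, e_{22}$ under the order zero map in the $Z_{n,n+1}$ relations. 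The first sum vanishes. But $\theta_k(x)\theta_{k+1}(y)$ is the nonzero elementary tensor $x\otimes y$ in two commuting tensor factors, so $\Theta(x)\Theta(y)$ is not close to $0$.

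Nor can you make $f_k f_{k+1}$ small. Since $\sum_l f_l = 1$, we have $f_k = f_k f_{k-1} + f_k^2 + f_k f_{k+1}$, so small cross products would force every $f_k$ to be nearly a projection in $\B$. That fails in general, e.g.\ when $\B$ has no nonzero projections. So the errors are neither summable nor small, and the claimed approximate $Z_{n,n+1}$ relations do not hold. The Kirchberg $\epsilon$-test alternative does not rescue this, since it needs exactly these finite-stage solutions.

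What is missing is a way to reconcile the copies on overlapping blocks. Two possible repairs:
\begin{itemize}
\item Join consecutive tail copies by a continuous path of unital embeddings $\Z\rightarrow\Mul(\B)$. Strong self-absorption of $\Z$ makes the two factor embeddings $\Z\rightarrow\Z\otimes\Z$ asymptotically unitarily equivalent. Then traverse this path slowly relative to the approximate unit, so that adjacent blocks carry nearly the same copy.
\item Split into even and odd blocks, giving partial sums $\Theta_0$ over even $k$ and $\Theta_1$ over odd $k$. Each is approximately order zero, since its blocks are orthogonal. The two have approximately commuting ranges, since the $f_k$ commute and distinct tensor copies commute. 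Also $\Theta_0(1)+\Theta_1(1)=1$. A lemma of Hirshberg--Winter--Zacharias for strongly self-absorbing algebras then turns such commuting order zero maps into a unital embedding of $\Z$ into the central sequence algebra.
\end{itemize}

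The paper avoids this analytic step altogether. It quotes Farah's theorem that $\C(\B^{\sim}\otimes\Z\otimes\K)$ is separably $\Z$-stable. It then transfers this to $\Mul(\B^{\sim}\otimes\Z\otimes\K)$ by permanence under extensions, the ideal being separable. Next it passes to $\Mul(\B)\cong P\Mul(\B^{\sim}\otimes\Z\otimes\K)P$ via hereditary subalgebras, using the Kucerovsky--Ng projection $P$, and finally to $\C(\B)$ via quotients.

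Your closure argument, the passage from $\Mul(\B)$ to $\C(\B)$, and the deduction of almost unperforation from R{\o}rdam's theorem are fine and match the paper's final step.
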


\begin{proof}[Rough sketch of proof.]
By \cite{Farah} Theorem C, $\C( \B^{\sim} \otimes \Z \otimes \K)$ is separably $\Z$-stable.
Since $\Z$-stability is preserved by extensions (\cite{TomsWinter} Theorem 4.3), $\Mul( \B^{\sim} \otimes \Z \otimes \K)$ is 
separably $\Z$-stable.  By \cite{KucerovskyNgCFPAUE} Proposition 3.1, 
we can find a projection $P \in 
\Mul( \B^{\sim} \otimes \Z \otimes \K)$ such that 
$P( \B^{\sim} \otimes \Z \otimes \K)P \cong \B \otimes \Z \cong \B$.
So $P\Mul( \B^{\sim} \otimes \Z \otimes \K)P \cong \Mul(\B)$.
But since $\Z$-stability is preserved by hereditary C*-subalgebras (\cite{TomsWinter} Corollary 3.1), $P\Mul( \B^{\sim} \otimes \Z \otimes \K)P$ 
and hence, $\Mul(\B)$, are separably $\Z$-stable.  Since $\Z$-stability is preserved by quotients (\cite{TomsWinter} Corollary 3.3), $\C(\B)$
is separably $\Z$-stable.

 We now prove that $W(\C(\B))$ is almost unperforated.  (The proof for $W(\Mul(\B))$ is similar.)
Assume that $[a], [b] \in W(\C(\B))$ and $n > m$ for which $n[a] \leq m [b]$.
Say that $a, b \in \M_L(\C(\B))_+$.   Then there exists a sequence $\{ x_l \}$ in $\M_{nL}(\C(\B))$ for which
$x_l (\bigoplus^m b) x_l^* \rightarrow \bigoplus^n a$. 
Since $\C(\B)$ is separably $\Z$-stable, let $\D_1$ be a separable $\Z$-stable C*-subalgebra of $\C(\B)$ 
such that $a, b \in \M_L(\D_1)$
and $x_l \in \M_{nL}(\D_1)$ fo all $l$.  Hence, $n [a] \leq m[b]$ in $W(\D_1)$.
Since $\D_1$ is $\Z$-stable, by \cite{RorZ} Theorem 4.5, $W(\D_1)$ is almost unperforated. 
Hence, $[a] \leq [b]$ in $W(\D_1)$.
Hence, $[a] \leq [b]$ in $W(\C(\B))$.   Since $[a], [b], m, n$ are arbitrary,   $W(\C(\B))$ is almost unperforated.  Exactly 
the same argument shows that $W(\Mul(\B))$ is almost unperforated.

\end{proof}

\begin{thm}  \label{thm:MainThFullZStable}
	Let $\A$ be a separable nuclear C*-algebra, and $\B$ be a nonunital 
	separable simple $\Z$-stable C*-algebra for which $\C(\B)$ is properly
	infinite.
	
	Then the canonical homomorphism
	$$\Gamma : \ee_f(\A, \B) \rightarrow KK(\A, \C(\B))$$
	is a group isomorphism.  
\end{thm}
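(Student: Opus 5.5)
The plan is to reduce the statement to Theorem \ref{thm:ClassifyWithStrongCFP}. Its hypotheses are that $\A$ is separable and nuclear, that $\B$ is nonunital, $\sigma$-unital and simple, that $\B$ has the strong CFP, and that $\C(\B)$ is properly infinite and $K_1$-injective. Separability of $\B$ gives $\sigma$-unitality, and proper infiniteness of $\C(\B)$ is assumed. So three things remain to check: the group structure on $\ee_f(\A,\B)$, the strong CFP for $\B$, and $K_1$-injectivity of $\C(\B)$.

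For the strong CFP, I will apply Proposition \ref{prop:ZStableImpliesUnperf}. Since $\B$ is nonunital, separable and $\Z$-stable, it shows that $W(\C(\B))$ is almost unperforated. Then Proposition \ref{prop:AlmostUnperfImpliesCFP}, together with proper infiniteness of $\C(\B)$, gives the strong CFP. As in the proof of Theorem \ref{thm:ClassifyWithStrongCFP}, this makes every full extension purely infinite, so $\ee_f(\A,\B)=\ee_{fpi}(\A,\B)$. The group structure then comes from Theorem \ref{thm:ExtfpiIsGroup}, once $K_1$-injectivity is known.

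The main step is $K_1$-injectivity of $\C(\B)$. Here I again use that $\C(\B)$ is separably $\Z$-stable. Let $u$ be a unitary in $\C(\B)$ whose class in $K_1(\C(\B))$ is zero. Then $u\oplus 1_n$ is connected to $1$ by a path of unitaries in some $\M_{n+1}(\C(\B))$. This path is compact, so its image together with $u$ and $1_{\C(\B)}$ generates a separable subalgebra. By Proposition \ref{prop:ZStableImpliesUnperf}, I enlarge that subalgebra to a separable, unital, $\Z$-stable $\D_1\subseteq\C(\B)$; the unit can be arranged by adjoining $1_{\C(\B)}$ and re-applying separable $\Z$-stability, and the resulting $\D_1$ may be taken to contain the whole path.

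Unital $\Z$-stable C*-algebras are $K_1$-injective; this is due to Jiang, and more generally Rørdam showed that $\Z$-stable algebras have stable rank one or are properly infinite in a way that gives $K_1$-injectivity. Hence $[u]=0$ in $K_1(\D_1)$ gives a homotopy from $u$ to $1$ within $U(\D_1)\subseteq U(\C(\B))$. Alternatively, one could argue directly that a unital properly infinite separably $\Z$-stable algebra is $K_1$-injective. This step is the one I expect to need care: the enlargement to $\D_1$ must preserve the unit and contain the $K_1$-trivializing path, so that triviality really holds in $K_1(\D_1)$.

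With all the hypotheses in place, Theorem \ref{thm:ClassifyWithStrongCFP} gives that $\Gamma$ is a group isomorphism.
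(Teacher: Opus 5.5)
Your argument follows the paper's proof: the strong CFP via Propositions \ref{prop:ZStableImpliesUnperf} and \ref{prop:AlmostUnperfImpliesCFP}; $K_1$-injectivity of $\C(\B)$ by passing to a separable unital $\Z$-stable $\D_1\subseteq\C(\B)$ that contains $u$ and the path trivializing $[u]$, which makes explicit a detail the paper leaves implicit; and then Theorem \ref{thm:ClassifyWithStrongCFP}. The only difference is that you invoke Jiang's theorem (for every unital $\Z$-stable $\D_1$ the map $U(\D_1)/U_0(\D_1)\to K_1(\D_1)$ is injective), so you need not make $\D_1$ properly infinite, whereas the paper arranges proper infiniteness of $\D_1$ and cites Rohde's Theorem 6.2.3; your aside about a ``stable rank one or properly infinite'' dichotomy holds only for simple algebras and is not needed, so drop it.
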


\begin{proof}[Sketch of proof]  By Propositions \ref{prop:ZStableImpliesUnperf} and \ref{prop:AlmostUnperfImpliesCFP}, 
$\B$ has the strong CFP.  

Next let us prove that $\C(\B)$ is $K_1$-injective.  Let $u \in \C(\B)$ be a unitary for which $[u]=0$ in $K_1(\C(\B))$.
Since $\C(\B)$ is properly infinite and separably $\Z$-stable (Proposition \ref{prop:ZStableImpliesUnperf}), we can find a  separable $\Z$-stable C*-algebra $\D_1$
such that $\D_1$ is properly infinite, $u \in \D_1 \subset \C(\B)$, $[u] = 0$ in $K_1(\D_1)$, and $1_{\C(\B)} \in \D_1$.  
By \cite{Rohde} Theorem 6.2.3, $\D_1$ is $K_1$-injective, and hence, $u$ is homotopic to $1$ in $U(\D_1)$.
Hence, $u$ is homotopic to $1$ in $U(\C(\B))$.  Since $u$ is arbitrary, $\C(\B)$ is $K_1$-injective.

From the above  and Theorem \ref{thm:ClassifyWithStrongCFP}, the result follows.
	\end{proof}

\begin{para}
 In upcoming papers  (\cite{NgThiel1}, \cite{NgThiel2}), we will prove that if $\B$ is a nonunital 
$\sigma$-unital simple pure C*-algebra for which $\C(\B)$ is properly infinite, then
$W(\C(\B))$ is almost unperforated (which implies that $\B$ has the strong CFP, by 
Proposition \ref{prop:AlmostUnperfImpliesCFP}).  In fact, we will prove that this $\C(\B)$ is pure and 
has real rank zero, which will also
imply that $\C(\B)$ is $K_1$-injective (\cite{LinRR0K1Injective}).  Hence, by Theorem \ref{thm:ClassifyWithStrongCFP},
 in Theorem \ref{thm:MainThFullZStable}, we can replace the hypotheses
of separability and $\Z$-stability of $\B$ with $\sigma$-unitality and purity  
of $\B$

Also, in \cite{NgThiel2}, for a $\sigma$-unital, nonunital, simple, pure C*-algebra $\B$, we provide computations of the K theory 
of $\Mul(\B)$ and $\C(\B)$; and this can be used to give
 concrete examples of the isomorphism in 
Theorem \ref{thm:MainThFullZStable}.
\end{para}

\section{Classifying extensions: The case of purely infinite corona algebras}

In this section, we will classify extensions which, unlike in the previous section (as well
as the classical BDF--Kasparov case), need not be full.  With the present methods, this will necessitate an 
assumption of (not necessarily simple) pure infiniteness for the corona algebra. 
We refer the reader again to Section \ref{sect:SomePreliminaries} 
for some of the preliminary notation and terminologies used here.    

We firstly define the
object that we want to compute.  
The definition itself does not require many assumptions
until
we need to prove more properties and provide classification.

\begin{df} \label{df:ExtSigma} 
	Let $\A$ and $\B$ be separable C*-algebras with $\B$ nonunital.
	Suppose that
	$\sigma : \A \rightarrow \C(\B)$ is an essential extension.
	
	Then $\ee_{\sigma}(\A, \B)$ consists of all $[\phi]$ (see \ref{para:DfUE}) such that 
	\begin{enumerate}
		\item[(a)] $\phi : \A \rightarrow \C(\B)$ is essential and has large complement, and  
		\item[(b)]  $Ideal_{\C(\B)}(\phi(a)) = Ideal_{\C(\B)}(\sigma(a))$ for all
		$a \in \A$. 
	\end{enumerate}  
\end{df}

\begin{para} By an argument similar to that of \ref{para:ExtNonempty},
if $\C(\B)$ is property infinite and $\A$ is separable and
nuclear, then we can always find an essential extension $\sigma : 
\A \rightarrow \C(\B)$;  and for any 
essential extension $\sigma : \A \rightarrow \C(\B)$, $\ee_{\sigma}(\A, \B)$
is nonempty.
\end{para}

\begin{para} \label{para:ExtsigmaSemigroup} 
When $\C(\B)$ is properly infinite, $\ee_{\sigma}(\A, \B)$ has a \emph{BDF sum}
that is (well-) defined  in exactly the same way as in \ref{para:ExtfpiBDFSUM} (see
also \ref{para:BDFSum}).  With this sum, $\ee_{\sigma}(\A, \B)$ is an abelian 
semigroup.  
\end{para}

To get more structure (e.g., group structure) and to prove more results
for $\ee_{\sigma}(\A, \B)$, we
will additionally assume that 
$\C(\B)$ is (not necessarily simple) purely infinite.

\begin{para} \label{para:PIAlgebra}
Recall that a (possibly nonsimple) C*-algebra $\D$ is said to be
\emph{purely infinite} if every nonzero positive element of $\D$ is
properly infinite -- i.e., for all $a \in \D_+$, 
$a \oplus a \preceq a$ in $\M_2 (\D)$ (see   
\ref{para:CuntzSubequiv}).

Simple purely infinite corona algebras were first characterized by Lin
(\cite{LinContScale}), and much subsequent work on extension
theory %(after the BDF--Kasparov theory 
%which centered on absorbing extensions)
focused on this case (e.g., see  
\cite{LinExtII}, \cite{LinExtIII}, \cite{GabeLinNg}).

It then becomes of interest to explore     
the case of nonsimple purely infinite corona algebras.  
We give the following result (Theorem \ref{thm:PICoronaCharacterization}), 
without explaining 
the definitions
or notation,
in order 
to give the reader an indication of the nature of the
theory.  We refer the reader to \cite{KNZPICorona} for definitions,
history and proofs, and  
we also note that the statements, in \cite{KNZPICorona}, of the results
are actually more general.  We specialize to the case of simple pure
canonical ideals to keep matters simple while still being at a relatively
general level, and we note that the statements in the stable canonical ideal
 case
are easier to understand (see the last paragraph in
 Theorem \ref{thm:PICoronaCharacterization}).  

Before going on, we note that for the rest of this paper, whenever we 
talk about a nonunital $\sigma$-unital canonical ideal $\B$, then we assume
that every quasitrace of $\B$ is a trace.
\end{para}

\begin{thm} \label{thm:PICoronaCharacterization}
	Let $\B$ be nonunital $\sigma$-unital simple pure C*-algebra with
	nonzero metrizable tracial simplex $T(\B)$.

	Then the following statements are equivalent:
	\begin{enumerate}
		\item $\B$ has quasicontinuous scale.
		\item $\Mul(\B)$ has strict comparison of positive elements by traces.
		\item $\C(\B)$ is purely infinite.
		\item $\Mul(\B)/\mathcal{I}_{min}$ is purely infinite.
		\item $\Mul(\B)$ has finitely many ideals.
		\item $\C(\B)$ has finitely many ideals.
		\item $\mathcal{I}_{min} = \mathcal{I}_{fin}$.
		\item $V(\Mul(\B))$ has finitely many order ideals. 
	\end{enumerate}

Suppose, in addition, that $\B$ is stable.

Then $\B$ has quasicontinuous scale if and only if $T(\B)$ has finitely 
many extreme points.
\end{thm}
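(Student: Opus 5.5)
The plan is to route every condition through a description of the ideal lattice of $\Mul(\B)$ in terms of the tracial data of $\B$, as in \cite{KNZPICorona}. Throughout, I would use the standing assumption that quasitraces are traces, together with purity of $\B$ (strict comparison and almost divisibility).

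\emph{Step 1: traces on $\Mul(\B)$ and the ideal lattice.} Each $\tau \in T(\B)$ extends to a lower semicontinuous, possibly infinite, trace on $\Mul(\B)_+$. For $A \in \Mul(\B)_+$, let $\widehat{A}(\tau)$ denote the associated dimension function $d_\tau(A)$, an affine lower semicontinuous function on $T(\B)$. Using strict comparison in $\B$ and a standard approximate unit decomposition, I would show that $Ideal(A) \subseteq Ideal(A')$ is decided by comparing the sets where these functions are finite or infinite, up to scalar multiples. In the quasicontinuous situation this comparison only involves finitely many extreme points.

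\emph{Step 2: the equivalences.} For $(1)\Leftrightarrow(7)$, quasicontinuity of the scale should say exactly that an element whose trace function is finite everywhere already lies in the minimal ideal $\mathcal{I}_{min}$ above $\B$; this identifies $\mathcal{I}_{fin}$ with $\mathcal{I}_{min}$. For $(1)\Rightarrow(5)$, when the scale is quasicontinuous, every ideal of $\Mul(\B)$ is determined by the set of extreme traces where the scale is infinite. Only finitely many such sets occur, which gives finitely many ideals. $(5)\Leftrightarrow(6)$ is immediate from the correspondence of ideals containing $\B$. For $(5)\Leftrightarrow(8)$, I would use that ideals of $\Mul(\B)$ are generated by their projections, since purity supplies enough projections in $\Mul(\B)$. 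For $(5)\Rightarrow(3),(4)$, each simple subquotient of $\C(\B)$ is simple purely infinite, since strict comparison makes full positive elements dominate $1$ locally, exactly as in Proposition \ref{prop:AlmostUnperfImpliesCFP}. With finitely many ideals, pure infiniteness then passes through the finite composition series by Kirchberg--R{\o}rdam permanence. The implication $(4)\Rightarrow(3)$ holds because $\C(\B)$ is a quotient of $\Mul(\B)/\mathcal{I}_{min}$, and $(3)\Rightarrow(4)$ holds because $\mathcal{I}_{min}/\B$ is handled like the simple case. For $(2)$, strict comparison in $\Mul(\B)$ by traces follows from Step 1 when there are finitely many extreme ``infinite'' traces. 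Conversely, if $(1)$ fails, a counterexample to comparison can be built from the construction in Step 3.

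\emph{Step 3: the main obstacle, $(3)\Rightarrow(1)$.} If the scale is not quasicontinuous, I would try to construct, from a sequence of extreme traces accumulating badly, a block-diagonal element $A = \sum_n a_n$ with $a_n \in \B$ orthogonal. This element should have trace function finite on some traces and infinite on others, in a pattern that forces infinitely many distinct ideals in $\C(\B)$. Alternatively, it should have an image in $\C(\B)$ that is not properly infinite. The delicate point is the choice of the $a_n$, which uses almost divisibility and Cuntz semigroup computations in $\B$.

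Finally, in the stable case the scale is identically infinite. Quasicontinuity then reduces to the condition that only finitely many extreme traces can occur, i.e.\ that $T(\B)$ has finitely many extreme points.
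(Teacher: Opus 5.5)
The paper does not prove this theorem. It quotes it from \cite{KNZPICorona} Theorem 7.11 and Corollary 7.12. Your self-contained reconstruction from trace data is a different route, but as written it does not close up, and some steps are wrong.

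\emph{The direction of the quotient in $(3)\Leftrightarrow(4)$ is reversed.} Since $\B \subsetneq \mathcal{I}_{min}$, it is $\Mul(\B)/\mathcal{I}_{min}$ that is a quotient of $\C(\B)$, not conversely. So $(3)\Rightarrow(4)$ is the trivial direction, because pure infiniteness passes to quotients. The direction $(4)\Rightarrow(3)$ is the one that needs work: you must show the minimal ideal $\mathcal{I}_{min}/\B$ of $\C(\B)$ is purely infinite, then use Kirchberg--R{\o}rdam extension permanence.

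\emph{Nothing leads from $(3)$ back to $(1)$.} Your Step 3 is the only candidate, and it is not carried out. Its first suggested route cannot work: exhibiting infinitely many ideals of $\C(\B)$ contradicts $(6)$, not $(3)$. Purely infinite algebras can have infinitely many ideals, e.g.\ $C([0,1]) \otimes O_2$. What you need is a genuine obstruction to proper infiniteness when the scale is not quasicontinuous. For instance, a positive element of $\C(\B)$, or of a subquotient, on which some lower semicontinuous trace-like functional built from $T(\B)$ is finite and nonzero. You don't supply one, and the converse half of $(2)$ is deferred to the same missing construction.

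\emph{Several supporting steps are unjustified.}
\begin{itemize}
\item Your $(5)\Rightarrow(3)$ argument appeals to Proposition \ref{prop:AlmostUnperfImpliesCFP}. That proposition needs $W(\C(\B))$ to be almost unperforated. The paper only obtains this for separable $\Z$-stable $\B$ (Proposition \ref{prop:ZStableImpliesUnperf}), and explicitly defers the merely pure case to forthcoming work.
\item Strict comparison in $\B$ says nothing directly about comparison in subquotients of $\C(\B)$. Those subquotients are typically nonunital, so ``dominating $1$'' is not the right criterion there.
\item For $(5)\Leftrightarrow(8)$, your claim that ideals of $\Mul(\B)$ are generated by projections is false for the ideal $\B$ itself whenever $\B$ is stably projectionless. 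Note that $(5)\Rightarrow(8)$ is actually free, since order ideals of $V(\Mul(\B))$ inject into ideals of $\Mul(\B)$. It is $(8)\Rightarrow(5)$ that needs every ideal properly containing $\B$ to be generated by its projections. That is a nontrivial input relating projections of $\Mul(\B)$ to trace data, and purity does not hand it to you automatically.
\item Step 1 is not a routine approximate-unit computation. Reading off comparison and ideal membership in $\Mul(\B)$ from trace functions is essentially statement $(2)$ itself, the main technical content of the theory you are trying to reconstruct.
\item You never state the definition of quasicontinuous scale. As a result, $(1)\Leftrightarrow(7)$ and the stable-case remark remain assertions rather than arguments.
\end{itemize}
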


\begin{proof}
	This follows immediately from
	\cite{KNZPICorona} Theorem 7.11 and
Corollary 7.12,  which is a more general result. 
(Again, notice that the stable canonical ideal case is easier to understand.)
\end{proof}

From the above, the hypotheses on the following result, which gives sufficient
conditions for when $\ee_{\sigma}(\A, \B)$ is a group, are reasonable.  

\begin{thm} \label{thm:ExtSigmaIsGroup}
	Let $\A$ be a separable nuclear C*-algebra. Let  $\B$ be a
 separable, nonunital,
	simple, pure C*-algebra with 
	quasicontinuous scale (equivalently, with $\C(\B)$ being purely infinite, by
	Theorem \ref{thm:PICoronaCharacterization}). 
	
	Let $\A \rightarrow \C(\B)$ be an essential extension.
	
	Then $\ee_{\sigma}(\A, \B)$ is an abelian group.\\
	(See \ref{para:ExtsigmaSemigroup}.) 
\end{thm}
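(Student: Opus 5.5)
The plan is to check the group axioms for $\ee_{\sigma}(\A, \B)$ with the BDF sum of \ref{para:ExtsigmaSemigroup}. We already know it is an abelian semigroup, so it remains to produce a neutral element and inverses. Throughout we use that $\C(\B)$ is purely infinite (Theorem \ref{thm:PICoronaCharacterization}). We also need $K_1$-injectivity of $\C(\B)$, since this is what makes the BDF sum well defined up to unitary equivalence (\ref{para:BDFSum}). I would get $K_1$-injectivity as follows: a properly infinite unital C*-algebra all of whose nonzero positive elements are properly infinite has enough projection structure that the Rørdam--Rohde type result applies. Alternatively, one reduces to separable properly infinite subalgebras, as in the proof of Theorem \ref{thm:MainThFullZStable}.

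\textbf{The zero element.} We look for $\phi_0$ with $[\phi_0]\in\ee_\sigma(\A,\B)$ that absorbs every element of the class. Every ideal of $\C(\B)$ in the relevant lattice is generated by the $\sigma(a)$. So the natural candidate is an extension $\phi_0$ that factors through $\mathcal{O}_2$ in a way compatible with $\sigma$. Concretely, take a unital embedding $\mathcal{O}_2\to \mathcal{O}_\infty\subseteq$ a corner of $\C(\B)$ and set $\phi_0 = \sigma\otimes 1_{\mathcal{O}_2}$, made precise via Kirchberg's $\mathcal{O}_2$-absorption. Pure infiniteness of the ideals of $\C(\B)$ makes $\sigma\oplus\sigma\oplus\cdots$ (an infinite BDF repeat, realized with a sequence of isometries with orthogonal ranges inside the large complement) meaningful.

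The main tool here is Kirchberg's classification of nuclear maps into purely infinite algebras up to approximate unitary equivalence by ideal-related data. Two nuclear $\mathcal{O}_2$-stable, $\mathcal{O}_\infty$-stable homomorphisms $\A\to\C(\B)$ inducing the same map on ideals, $a\mapsto Ideal(\phi(a))$, are approximately Murray--von Neumann equivalent. This holds because in a purely infinite $\C(\B)$, Cuntz comparison of positive elements is determined by the ideals they generate: $x\preceq y$ iff $x\in Ideal(y)$. Hence any $\phi\in\ee_\sigma$ and $\phi_0$ satisfy $\phi\oplus\phi_0\approx\phi$ in the approximate sense. We then upgrade to asymptotic equivalence (Gabe's ideal-related $\mathcal{O}_\infty$-stable version of Theorem B of \cite{GabeOinfty}, applied to the forced unitizations $\phi^+$ as in the proof of Theorem \ref{thm:ClassifyFullCase}). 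After that, Lemma \ref{lem:PhillipsWeaver1} gives weak equivalence, and \cite{GabeLinNg} Proposition 3.7 gives unitary equivalence. This shows $[\phi]+[\phi_0]=[\phi]$.

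\textbf{Inverses.} Use nuclearity of $\A$ and the Choi--Effros lifting to get a completely positive contractive lift. Then construct $\psi$ with $\phi\oplus\psi$ equivalent to a Stinespring-type dilation that contains a copy of an $\mathcal{O}_2$-factorizing map with the same ideal data. By the zero-element step, that map represents $0$. The ideal condition (b) for $\psi$ must be arranged by building $\psi$ inside the same ideals as $\phi$. Pure infiniteness lets us compress a dilation back into the large complement of $\phi$ while preserving $Ideal(\psi(a))=Ideal(\sigma(a))$.

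\textbf{The main obstacle.} I expect the hardest step to be the ideal-related uniqueness: verifying that the relevant unitized maps are ideal-related strongly $\mathcal{O}_\infty$-stable, and invoking the nonfull analogue of \cite{GabeOinfty} Theorem B. This needs a nonfull version of Proposition \ref{prop:fpiCommutant} in which the fullness equation (\ref{equ:Feb1020216AM}) is replaced by ``$1$ is Cuntz below $\phi(a)$ relative to $Ideal(\phi(a))$''. I would first try to reprove Proposition \ref{prop:fpiCommutant} ideal-by-ideal, using \cite{ChandNgSutradhar} Theorem 2.9 inside each ideal.
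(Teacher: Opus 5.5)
Your proof has a genuine gap at the inverse step, and the neutral-element step also needs repair. (The paper itself only cites \cite{NgExtFunctor2} Theorem 3.2, so a self-contained argument is a different route.)

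\textbf{Inverses.} Choi--Effros lifting plus a Kasparov--Stinespring dilation is the BDF--Kasparov mechanism. It makes $\phi\oplus\psi$ a \emph{trivial} extension and then needs trivial extensions to be zero. That fails here in two ways. First, the dilation lives over $\B\otimes\K$, and you give no way back into $\C(\B)$ that keeps condition (b). More seriously, $\B$ is not stable, so $K_*(\Mul(\B))$ need not vanish and a trivial extension need not be neutral. For example, take $\A=\mathbb{C}$, $\B$ with continuous scale, and a projection $P\in\Mul(\B)$ with $\pi(P)\neq 0\neq \pi(1-P)$. Under Theorem \ref{thm:ClassifyFullCase}, the extension $1\mapsto\pi(P)$ goes to $\pi_*[P]\in K_0(\C(\B))$. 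This vanishes only if $[P]$ comes from $K_0(\B)$, which typically fails. Saying the dilation ``contains an $\mathcal{O}_2$-factorizing map'' names what is needed without producing it.

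What works is the relative commutant. Given $[\phi]\in\ee_\sigma(\A,\B)$, Proposition \ref{prop:PICoronaCommutant} applied to $\phi^+$ gives isometries $s_1,s_2,\ldots\in\phi(\A)'$ generating a unital copy of $\mathcal{O}_\infty$. For a projection $h\in\mathcal{O}_\infty$ with $h\neq 0,1$, put $\phi_h=h\phi(\cdot)$. Then $[\phi_h]\in\ee_\sigma(\A,\B)$: an isometry $v\in\mathcal{O}_\infty$ with $vv^*\le h$ gives $v^*\phi_h(a)v=\phi(a)$, and $1-h$ is a large complement. The class $[\phi_h]$ depends only on $[h]\in K_0(\mathcal{O}_\infty)=\mathbb{Z}$. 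Indeed, two such $h$ with the same class are conjugate by a unitary of $\mathcal{O}_\infty$, which lies in $U_0(\C(\B))$ and hence lifts. BDF sums computed with $s_1,s_2$ give $[\phi_h]+[\phi_{h'}]=[\phi_{s_1hs_1^*+s_2h's_2^*}]$, and $[\phi]=[\phi_{s_1s_1^*}]$ by \cite{GabeLinNg} Proposition 3.7. So for $[f]=-1$ and $[g]=0$ you get $[\phi]+[\phi_f]=[\phi_g]$ and $[\phi]+[\phi_g]=[\phi]$.

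It remains to show that $[\phi_g]$ does not depend on $\phi$. Since $g\mathcal{O}_\infty g$ contains a unital $\mathcal{O}_2$, the maps $\phi_g$ and $\psi_{g'}$ are nuclear, strongly $\mathcal{O}_2$-stable, and generate the ideals $Ideal(\sigma(a))$. This is exactly where the ideal-related $\mathcal{O}_2$-stable uniqueness theorem you quote applies. Follow it with the upgrade to unitary equivalence used in the proof of Theorem \ref{thm:MainThEnd}. Alternatively, that proof uses only additivity of $\Gamma$, never inverses in $\ee_\sigma(\A,\B)$. It therefore makes $\Gamma$ a bijective semigroup homomorphism onto the group $KK(X;\A,\C(\B))$, which already forces $\ee_\sigma(\A,\B)$ to be a group.

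\textbf{Neutral element.} There is no unital embedding $\mathcal{O}_2\to\mathcal{O}_\infty$: $[1]=0$ in $K_0(\mathcal{O}_2)$, while $[1]$ generates $K_0(\mathcal{O}_\infty)\cong\mathbb{Z}$. Also, $\sigma\otimes 1_{\mathcal{O}_2}$ only makes sense for a copy of $\mathcal{O}_2$ commuting with the image; $\phi_g$ above is the correct version. The step ``hence $\phi\oplus\phi_0\approx\phi$'' is a non sequitur. $\mathcal{O}_2$-uniqueness compares two $\mathcal{O}_2$-stable maps, and $\phi$ is not one. That Cuntz comparison of \emph{elements} is governed by ideals says nothing about equivalence of \emph{maps}. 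You must first split $\phi\cong\phi\oplus\phi_g$ inside the commutant, and only then compare $\phi_g$ with $\phi_0$. Alternatively, use $[\phi\oplus\phi_0]_{KKX}=[\phi]_{KKX}$ with \cite{GabeOinfty} Theorem F, the nonfull replacement for Theorem B.

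For $K_1$-injectivity, cite \cite{ChandNgSutradhar} Theorem 4.9(4), as the paper does. Your reduction imitates Theorem \ref{thm:MainThFullZStable}, whose proof uses $\Z$-stability, and that is not assumed here. Finally, the ``main obstacle'' you name is already Proposition \ref{prop:PICoronaCommutant}. It is proved not ideal by ideal, but by replacing the comparison step of Proposition \ref{prop:fpiCommutant} with Lemma \ref{lem:ProperlyInfiniteMap}, which rests on \cite{NgExtFunctor2} Theorem 2.12.
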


\begin{proof}
	
	This follows from \cite{NgExtFunctor2} 
	Theorem 3.2, where the hypotheses (and thus the result) 
are more general. 
\end{proof}

\begin{para}  We note that  with hypotheses as in 
Theorem \ref{thm:ExtSigmaIsGroup} (equivalently, with
hypotheses satisfying any of the conditions in 
Theorem \ref{thm:PICoronaCharacterization}),
there does exist an essential extension $\sigma$ as in the statement
of Theorem \ref{thm:ExtSigmaIsGroup}, and also,
$\ee_{\sigma}(\A, \B)$ is nonempty.
This is proven using the same argument as that of paragraph
\ref{para:ExtNonempty}.
\end{para}

Towards computing the object $\ee_{\sigma}(\A, \B)$ and, thus, to classify
extensions, we will also need to prove some results, which are analogous  
to Propositon \ref{prop:fpiCommutant}, though some technical additions 
are needed:

\begin{lem} \label{lem:ProperlyInfiniteMap}
	Let $\A$ be a separable, unital, nuclear C*-algebra,  and let
	$\B$ be a separable, nonunital, simple, pure C*-algebra with quasicontinuous scale
	(equivalently, with $\C(\B)$ being purely infinite, by Theorem \ref{thm:PICoronaCharacterization}).
	
	Let $\phi: \A \rightarrow \C(\B)$ be a  
	unital,  essential extension. 
	
	Then there exists a partial isometry $v \in \M_2 \otimes \C(\B)$ such that 
	$$v (\phi(a) \oplus 0) v^*  = \phi(a) \oplus \phi(a) 
\makebox{ in } \M_2 \otimes \C(\B)$$
	for all $a \in \A$.  (Here, $\oplus$ is the usual 
``diagonal sum".)
\end{lem}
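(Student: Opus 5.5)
We want an analogue of the first step in Proposition \ref{prop:fpiCommutant}, but without fullness. There, \cite{ChandNgSutradhar} Theorem 2.9 was applied to $\rho = \phi \oplus 0$ and $\phi\oplus\phi$, using the ``$x\phi(a)x^* = 1$'' condition. Here $\phi$ need not be full, so the plan is to replace that condition by an ideal-wise comparison and invoke a Kirchberg--Rørdam type uniqueness result for nuclear maps into purely infinite (nonsimple) algebras.

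\emph{Step 1: the two maps generate the same ideals.} Set $\rho =_{df} \phi \oplus 0$ and $\theta =_{df} \phi \oplus \phi$, both maps $\A \rightarrow \M_2 \otimes \C(\B)$. For every $a \in \A$, $Ideal(\rho(a)) = Ideal(\theta(a)) = \M_2(Ideal_{\C(\B)}(\phi(a)))$. Both maps are nuclear, since $\A$ is nuclear.

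\emph{Step 2: each element of the range is properly infinite.} Since $\C(\B)$ is purely infinite (Theorem \ref{thm:PICoronaCharacterization}), so is $\M_2\otimes\C(\B)$. Hence for all $a \in \A_+$ we get $\theta(a) \preceq \phi(a) \preceq \rho(a)$, and similarly $\rho(a)\preceq\theta(a)$. Moreover, for finitely many $a_i$ and any $\epsilon$ there are elements implementing approximate domination. By Kirchberg's approximate unitary equivalence results for nuclear maps into purely infinite algebras (Kirchberg--Rørdam, ``ideal-related'' $\mathcal O_2$/$\mathcal O_\infty$-absorption), $\theta$ and $\rho$ are approximately Murray--von Neumann equivalent. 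That is, there is a sequence $v_n$ with $v_n\rho(a)v_n^*\to\theta(a)$ and $v_n^*\theta(a)v_n\to\rho(a)$. The unitality of $\phi$ allows us to take $v_n$ with $v_n v_n^* \approx 1\oplus 1$ and $v_n^*v_n \approx 1\oplus 0$.

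\emph{Step 3: upgrade approximate to exact.} Since $\A$ is separable and $\C(\B)$ is a corona algebra of a $\sigma$-unital algebra, it is SAW* / has countable degree-1 saturation (Farah--Hart; cf.\ the use of \cite{PhillipsWeaver} in Lemma \ref{lem:PhillipsWeaver1}). The countably many approximate relations defining the desired partial isometry $v$ are therefore consistent, and so are exactly realized in $\M_2\otimes\C(\B) \cong \C(\M_2\otimes\B)$. Concretely, we pass to $\ell^\infty/c_0$ of the corona, get an exact partial isometry there intertwining the constant copies of $\rho$ and $\theta$, and then pull it back with Phillips--Weaver Proposition 1.4 exactly as in Lemma \ref{lem:PhillipsWeaver1}. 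This yields $v$ with $v\rho(a)v^* = \phi(a)\oplus\phi(a)$.

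\emph{Main obstacle.} The main obstacle is Step 2. We need the appropriate nonsimple version of \cite{ChandNgSutradhar} Theorem 2.9, namely that for nuclear maps into a purely infinite C*-algebra, ideal-wise equality forces approximate (Cuntz-type) equivalence of the maps. Pure infiniteness of each $\rho(a)$ alone is not enough: one needs it uniformly over finite subsets of $\A$. My first attempt would be Kirchberg--Rørdam's result that purely infinite plus nuclear implies that $\rho$ is approximately dominated by any map with the same ideal data. Failing that, I would use Gabe's $\mathcal O_\infty$-stable ideal-related uniqueness theorem, applied to the separable subalgebra generated by the ranges, as a substitute.
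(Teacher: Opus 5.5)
\textbf{There is a genuine gap: your Step 2 asserts something false.} A two-sided approximate Murray--von Neumann equivalence, $v_n\rho(a)v_n^*\to\theta(a)$ and $v_n^*\theta(a)v_n\to\rho(a)$, forces (take $a=1$) $v_n^*v_n\to 1\oplus 0$ and $v_nv_n^*\to 1\oplus 1$. So $v_n$ is asymptotically a partial isometry from $\rho(1)$ onto $\theta(1)$ that conjugates $\rho$ onto $\theta$. Such an equivalence preserves $K$-theory, so it gives $K_*(\phi)=2K_*(\phi)$, i.e.\ $K_*(\phi)=0$.

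This fails in cases covered by the lemma. Take $\B=\K\otimes\Z$, which is stable with a unique trace up to scaling, hence has quasicontinuous scale by Theorem \ref{thm:PICoronaCharacterization}. Take $\A=C(\mathbb{T})$, and let $\phi(z)$ be the image in $\C(\B)$ of $s\otimes 1_{\Z}$, where $s$ is the unilateral shift. Then $[\phi(z)]\in K_1(\C(\B))\cong K_0(\B)$ has index $\pm[e_{1,1}\otimes 1_{\Z}]\neq 0$. But a partial isometry $v$ with $v^*v=1\oplus 0$, $vv^*=1\oplus 1$ and $v(\phi(z)\oplus 0)v^*=\phi(z)\oplus\phi(z)$ would give $[\phi(z)]=2[\phi(z)]$.

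Equality of ideals cannot detect this obstruction. What is true, and all the lemma asks for, is a \emph{one-sided} domination. In it, $v^*v$ must be allowed to be a proper subprojection of $1\oplus 0$ commuting with $\rho(\A)$; in the paper's construction $v^*v=w^*w\oplus 0$.

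Your fallback does not rescue the argument, for two reasons. First, Gabe's uniqueness theorems require the maps to be strongly $\mathcal{O}_\infty$-stable, and in this paper that property is \emph{derived} from the present lemma via Proposition \ref{prop:PICoronaCommutant}, so using them here is circular. Second, those theorems compare maps with equal $KK$-type classes, which $\rho$ and $\theta$ need not have. You also have not identified a theorem giving even one-sided approximate domination for nuclear maps into a nonseparable, nonsimple purely infinite algebra. So Step 3 has nothing correct to upgrade.

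\textbf{What the paper uses instead.} The missing ingredient is an exact one-sided domination theorem, \cite{NgExtFunctor2} Theorem 2.12: in this setting, if $\psi(a)\in Ideal(\phi(a))$ for all $a\in\A$, then $w\phi(\cdot)w^*=\psi(\cdot)$ for some $w\in\C(\B)$. The paper applies it inside $\C(\B)$, not $\M_2\otimes\C(\B)$, to the BDF self-sum $\psi=S\phi(\cdot)S^*+T\phi(\cdot)T^*$, where $S,T$ are isometries with $SS^*+TT^*\leq 1$. Unitality of $\phi$ gives $ww^*=SS^*+TT^*$, so $w$ is a partial isometry. Then $v=(\widetilde{S}^*+\widetilde{e}_{2,1}\widetilde{T}^*)\widetilde{w}$ carries $\phi(a)\oplus 0$ to $\phi(a)\oplus\phi(a)$ by direct computation, and no approximate-to-exact step is needed. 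To repair your outline, replace Step 2 with this one-sided statement and drop the normalization $v^*v\approx 1\oplus 0$.
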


\begin{proof}
	Since $\C(\B)$ is properly infinite, let $S, T \in \C(\B)$ be isometries such that
	$SS^* + TT^* \leq 1_{\C(\B)}$.  (Necessarily, $S$ and $T$ have orthogonal ranges.)
	Now consider the *-homomorphism $\psi: \A \rightarrow \C(\B)$ that is given by
	$\psi(a) =_{df} S \phi(a) S^* + T \phi(a) T^* \makebox{ for all  } a \in \A.$
	(I.e., $\psi$ is the BDF sum of $\phi$ with itself. See \ref{para:BDFSum}.)
	Note that $\psi(a) \in Ideal(\phi(a))$ for all $a \in \A$.
	
	By \cite{NgExtFunctor2} Theorem 2.12, let $w \in \C(\B)$ be such that 
	$w \phi(a) w^* = \psi(a) \makebox{ for all } a \in \A.$ 
	Since $\phi$ is unital, $w w^* = S S^* + TT^*$ is a projection 
	(in $\C(\B)$), and
	hence, $w$ is a partial isometry.
	
	To simplify notation, 
	let $\widetilde{S}, \widetilde{T}, \widetilde{w}
	\in \M_2 \otimes \C(\B)$ be given by  
	$\widetilde{S} =_{df} S \oplus 0, \makebox{ }
	\widetilde{T} =_{df} T \oplus 0, \makebox{ and }
	\widetilde{w} =_{df} w \oplus 0$ in $\M_2 \otimes \C(\B)$, 
where $\oplus$ is the
usual ``diagonal sum". 
	Noting that $\widetilde{e}_{2,1} =_{df} \left[\begin{array}{cc} 0 & 0 \\
		1 & 0\end{array}\right] 
	\in
	\M_2 \otimes \C(\B)$ is a partial 
	isometry that witnesses that $1 \oplus 0 
	\sim 0 \oplus 1$ in $\M_2 \otimes \C(\B)$, let 
	$v \in \M_2 \otimes \C(\B)$ be the partial isometry that is defined by
	$v =_{df} (\widetilde{S}^* + \widetilde{e}_{2,1} \widetilde{T}^*)
	\widetilde{w}.$
	Then for all $a \in \A$,
	we have that
	$$v (\phi(a) \oplus 0) v^* = \phi(a) \oplus \phi(a) \makebox{ in }
	\M_2 \otimes \C(\B),$$ 
where $\oplus$ is the ``diagonal sum".
\end{proof}

\begin{prop} \label{prop:PICoronaCommutant}
	Let $\A$ be a unital
 separable nuclear C*-algebra,  and let
	$\B$ be a separable, nonunital, simple, pure C*-algebra with quasicontinuous scale
	(equivalently, with $\C(\B)$ being purely infinite, by Theorem \ref{thm:PICoronaCharacterization}).
	
	Let $\phi : \A \rightarrow \C(\B)$ be a unital essential extension.   
	
	Then $\phi(\A)'$ is properly infinite.
\end{prop}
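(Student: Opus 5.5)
The plan is to reproduce the argument of Proposition \ref{prop:fpiCommutant} verbatim, with Lemma \ref{lem:ProperlyInfiniteMap} replacing the use of \cite{ChandNgSutradhar} Theorem 2.9. Set $\rho =_{df} \phi \oplus 0 : \A \rightarrow \M_2 \otimes \C(\B)$. By Lemma \ref{lem:ProperlyInfiniteMap}, there is a partial isometry $v \in \M_2 \otimes \C(\B)$ with
$$v \rho(a) v^* = \phi(a) \oplus \phi(a) \makebox{ for all } a \in \A.$$

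First normalize $v$. Since $\phi$ is unital, taking $a = 1$ gives $v(1 \oplus 0)v^* = 1 \oplus 1$. Replacing $v$ by $v(1 \oplus 0)$ changes nothing in the displayed identity, because $\rho(a) = (1\oplus 0)\rho(a)(1\oplus 0)$. With this replacement, $w =_{df} v(1\oplus 0)$ satisfies $ww^* = 1\oplus 1$, so $w$ is a partial isometry (a coisometry). Its support projection then satisfies $w^*w \leq 1 \oplus 0$ and $ww^* = 1 \oplus 1$. Rename $w$ as $v$.

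Next I show that $v^*v$ commutes with $\rho(\A)$. Multiplicativity of both $\rho$ and $\phi\oplus\phi$ gives
$$v\rho(a)v^*v\rho(a)v^* = v\rho(a)\rho(a)v^*,$$
hence $v\rho(a)(1-v^*v)\rho(a)v^* = 0$ for self-adjoint $a$. Multiplying on the left by $v^*$ and on the right by $v$ yields $v^*v\rho(a)(1-v^*v)\rho(a)v^*v = 0$. This says $\|(1-v^*v)\rho(a)v^*v\|^2 = 0$, so $(1-v^*v)\rho(a)v^*v = 0$. Applying the same to adjoints shows that $v^*v$ commutes with $\rho(a)$.

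It follows, exactly as in the displayed computation of Proposition \ref{prop:fpiCommutant}, that
$$(\phi(a)\oplus\phi(a))v = v(\phi(a)\oplus 0)v^*v = v(\phi(a)\oplus 0) = v(\phi(a)\oplus\phi(a)).$$
The last equality uses $v = v(1\oplus 0)$. Hence $v \in \M_2(\phi(\A)') = (\phi\oplus\phi)(\A)' \cap \M_2(\C(\B))$. Inside $\M_2(\phi(\A)')$ we have $v^*v \leq 1\oplus 0$ and $vv^* = 1\oplus 1$. Thus $1\oplus 1 \preceq 1 \oplus 0$ in $\M_2(\phi(\A)')$, which is proper infiniteness of the unit of $\phi(\A)'$. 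Note that $1_{\C(\B)} \in \phi(\A)'$.

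The only genuinely new input is Lemma \ref{lem:ProperlyInfiniteMap}, which is already proven. The main point needing care is therefore just the normalization step: arranging $v^*v \leq 1\oplus 0$ and $vv^* = 1\oplus 1$ exactly. This uses unitality of $\phi$ as above, so I expect no real obstacle.
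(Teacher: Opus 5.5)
Your proposal is correct and takes the same approach as the paper, which reruns the argument of Proposition \ref{prop:fpiCommutant} with Lemma \ref{lem:ProperlyInfiniteMap} substituted for the cited comparison theorem. Your normalization $v \mapsto v(1\oplus 0)$, which uses unitality of $\phi$, and your self-adjointness step in the commutation argument fill in details the paper leaves as ``clearly, we may assume''.
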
 

\begin{proof}
	The proof is exactly the same as that of  
	Proposition \ref{prop:fpiCommutant}, except that we 
	replace (\ref{equ:June29202010AM}) -- i.e., the application of
	\cite{NgExtFunctor2} Theorem 2.9 -- by this paper's Lemma
	\ref{lem:ProperlyInfiniteMap}.  
	\end{proof}

Continuing to move  
towards computing the object $\ee_{\sigma}(\A, \B)$,  
we will briefly
recall some notations and notions for actions of a topological space
on a C*-algebra and Kirchberg's interesting 
ideal related KK theory (See \cite{KirchbergIdealKK};  see also 
\cite{GabeOinfty}, \cite{MeyerNest1}). We only (quickly) 
introduce the minimally
necessary notions,
referring to \cite{GabeOinfty} as our main reference, and for more 
details.

\begin{para} \label{para:O(X)I(D)} 
Recall that a \emph{complete lattice} is a partially ordered set where every
subset has both a supremum and an infimum (e.g., see \cite{GabeOinfty}   
subsection 10.1).  For a topological space $X$, let $\oo(X)$ 
denote the collection 
of open subsets of $X$. Then $\oo(X)$ is partially ordered, 
where the order relation is the subset 
relation. In fact,  $\oo(X)$ 
is a complete lattice where infimum and supremum are   
defined in the following ways:  Let $\{ G_{\alpha} \}_{\alpha \in I}$ be
a collection in $\oo(X)$ (i.e., $\{ G_{\alpha} \}_{\alpha \in I}$
is a collection of open subsets of $X$). 
Then the infimum of $\{ G_{\alpha} \}_{\alpha \in I}$ is 
$Int(\bigcap_{\alpha \in I} G_{\alpha})$ (i.e., the interior of 
$\bigcap_{\alpha \in I} G_{\alpha}$). (See \cite{GabeOinfty} footnote 38.)
Also, the supremum of $\{ G_{\alpha} \}_{\alpha \in I}$ is obviously
$\bigcup_{\alpha \in I} G_{\alpha}$. 

Similarly, for a C*-algebra $\D$,
if we let $\I(\D)$ denote the partially ordered
collection of closed two-sided ideals of $\D$ (again
where the order relation is the subset relation), then $\I(\D)$ is a complete
lattice where the supremum and infimum are defined in the following ways:  Let 
$\{ \J_{\alpha} \}_{\alpha \in I}$ be a collection in $\I(\D)$ (i.e., 
$\{ \J_{\alpha} \}_{\alpha \in I}$ is a 
collection of closed two-sided ideals of $\D$).  Then the supremum of 
$\{ \J_{\alpha} \}_{\alpha \in I}$ is 
$\overline{\sum_{\alpha \in I} \J_{\alpha}}$  (the norm closure of the 
algebraic sum (finite sums) of all the $\J_{\alpha}$).   
(See \cite{GabeOinfty} equation (10.2).)   
Also, obviously, the infimum of $\{ \J_{\alpha} \}_{\alpha \in I}$ is
$\bigcap_{\alpha \in I} \J_{\alpha}$. 
\end{para}

\begin{para} \label{para:Actions} Next, we recall  
some basic notations and ideas concerning actions of topological spaces on 
C*-algebras.  Our main reference for this material
is \cite{GabeOinfty} subsection 10.2 (supplemented with subsections
10.1 and 10.3).  
Let 
$X$ be a topological space, and let 
$\D$ be a C*-algebra. Recall that  $\oo(X)$ is the  
(complete) lattice of open subsets of $X$, and  $\I(\D)$ is  the
(complete) lattice of closed two sided ideals of $\D$ (see 
the previous paragraph \ref{para:O(X)I(D)}).  
An \emph{action} of $X$ on $\D$ is an order preserving map
$\Phi_{\D} : \oo(X) \rightarrow \I(\D)$.  
The pair $(\D, \Phi_{\D})$ is called an \emph{$X$-C*-algebra} -- though
sometimes, when the context (action) is clear,
we omit the $\Phi_{\D}$ and just call $\D$ an $X$-C*-algebra.   
To save notation, we also sometimes write 
$\D(U) =_{df} \Phi_{\D}(U)$ for every $U \in \oo(X)$.   
Suppose that $\E$ is another C*-algebra and $\Phi_{\E} : \oo(X) 
\rightarrow \I(\E)$ is an action of $X$ on $\E$ (i.e.,    
$(\E, \Phi_{E})$ is another $X$-C*-algebra). A map   
$\phi : \D \rightarrow \E$ is called \emph{$X$-equivariant} 
if $$\phi(\A(U)) \subseteq \E(U) \makebox{ for all } U \in \oo(X)$$ 
or, equivalently, $\phi \circ \Phi_{\D} \subseteq \Phi_{\E}$ as set maps.
\end{para}

\begin{para} \label{para:FullXMap}
Recall the definition of a full equivariant map as in 
\cite{GabeOinfty}, the second paragraph after equation (1.8) (see
also \cite{GabeOinfty} Remark 10.19):   
Suppose that $X$, $\D$, $\E$, $\Phi_{\D}$ and
$\Phi_{\E}$ are as in the previous paragraph \ref{para:Actions} (i.e., 
we are assuming that $(\D, \Phi_{\D})$ and $(\E, \Phi_{\D})$ are 
$X$-C*-algebras). Suppose, in addition, that for every $d \in \D$, there
exists a smallest $U_d \in \oo(X)$
for which $d \in \Phi_{\D}(U_d)$ 
(i.e, there exists a smallest open subset $U_d \subseteq X$
for which $d \in \D(U_d)$).  Then an $X$-equivariant *-homomorphism 
$\phi : \D \rightarrow \E$ is said to be \emph{$X$-full} if for all
$d \in \D$, $\phi(d)$ is full in $\E(U_d) = \Phi_{\E}(U_d)$.
(Note that the assumption that $U_d$ exists, for all $d \in \D$, 
is true when $\Phi_{\D}$ is \emph{lower semicontinuous}.  See 
\cite{GabeOinfty} Definition 10.8 and the paragraph after it.)
\end{para}

\begin{lem} \label{lem:Oct3020242AM}
	Let $\D$ and $\E$ be C*-algebras, and 
	let $\phi : \D \rightarrow \E$ be a  *-homomorphism.
	Let $X$ be the topological space given by $X =_{df} Prim(\E)$, i.e., 
	$X$ is the primitive ideal space of $\E$, endowed with the Jacobson or
	hull--kernel topology  
	(see \cite{DavidsonBook}
	section VII.3 or \cite{PedersenBook} subsection 4.1.2).   
	
	Let $\phi : \D \rightarrow \E$ be a  *-homomorphism,   
	 and
	let $\Phi_{\D} : \oo(X) \rightarrow \I(\D)$ and $\Phi_{\E} : \oo(X) \rightarrow
	\I(\E)$ be the 
actions of $X$, on $\D$ and $\E$ respectively, that are given as
	follows:
	
	\begin{equation} \label{equ:Oct3020241AM} 
		\Phi_{\E} : G \mapsto \bigcap_{\J \in X - G} \J \makebox{ and }
		\Phi_{\D} : G \mapsto \phi^{-1}(\Phi_{\E}(G)).
	\end{equation}
	
	Then the following statements are true:
	\begin{enumerate} 
		\item $\Phi_{\E}$ is an  order isomorphism (and hence, is bijective). 
		\item $\Phi_{\D}$ preserves  infima 
		 (i.e., $\Phi_{\D}$
preserves infima of arbitrary subsets of
		$\oo(X)$).  If, additionally, $\oo(X)$ is finite, then
$\Phi_{\D}$ preserves suprema of nonempty increasing nets 
		in $\oo(X)$.    
		\item For every $d \in \D$, there exists a smallest  open subset
		$U_d \subseteq X$ for which
		$d \in \D(U_d)$.   
		\item The *-homomorphism $\phi$ is $X$-equivariant and $X$-full.
		\item Suppose that $\psi : \D \rightarrow \E$ is a *-homomorphism. 
		Then 
		\begin{equation}  \label{equ:Oct1820243AM}
			Ideal(\psi(a)) = Ideal(\phi(a)) \makebox{ for all }a \in \D  
		\end{equation} 
		if and only if 
		$\psi$ is $X$-equivariant 
and $X$-full. 
\end{enumerate}

\end{lem}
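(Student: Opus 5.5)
The plan is to use the standard correspondence between open subsets of $Prim(\E)$ and closed ideals of $\E$, and then to check each item in turn.

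\textbf{Item (1).} By the hull--kernel topology (\cite{PedersenBook} 4.1.2), the closed subsets of $X$ are exactly the hulls $hull(\J) = \{ P \in X : P \supseteq \J \}$. The map $\J \mapsto X - hull(\J)$ is an order isomorphism from $\I(\E)$ onto $\oo(X)$. Its inverse sends $G$ to $\ker(X - G) = \bigcap_{\J \in X - G} \J$, because every closed ideal is the intersection of the primitive ideals containing it. This inverse is precisely $\Phi_{\E}$, which proves (1). Since an order isomorphism between complete lattices preserves arbitrary infima and suprema, $\Phi_{\E}$ does as well. In particular $\Phi_{\E}(Int(\bigcap G_\alpha)) = \bigcap \Phi_{\E}(G_\alpha)$, because the infimum in $\I(\E)$ is the intersection.

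\textbf{Item (2).} Preimages of ideals under $\phi$ are ideals, and $\phi^{-1}$ commutes with intersections. Hence $\Phi_{\D}(\inf G_\alpha) = \phi^{-1}(\bigcap \Phi_{\E}(G_\alpha)) = \bigcap \phi^{-1}(\Phi_{\E}(G_\alpha))$, which is the infimum in $\I(\D)$. If $\oo(X)$ is finite, every nonempty increasing net is eventually constant, so its supremum is attained at some member. Preservation of suprema is then trivial.

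\textbf{Item (3).} Define $U_d =_{df} \inf\{ G \in \oo(X) : d \in \D(G)\}$. The family is nonempty because $\D(X) = \phi^{-1}(\E) = \D$. By (2), $\D(U_d) = \bigcap \D(G) \ni d$, and $U_d$ is clearly the smallest such open set. Concretely, $\Phi_{\E}(U_d) = Ideal_{\E}(\phi(d))$: indeed $d \in \D(G)$ iff $\phi(d) \in \E(G)$ iff $Ideal(\phi(d)) \subseteq \E(G)$, and since $\Phi_{\E}$ is bijective we can take $G = \Phi_{\E}^{-1}(Ideal(\phi(d)))$, which is the smallest such $G$.

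\textbf{Item (4).} Equivariance holds by definition, since $\phi(\phi^{-1}(\E(G))) \subseteq \E(G)$. Fullness says that $\phi(d)$ is full in $\E(U_d) = Ideal(\phi(d))$, which is a tautology.

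\textbf{Item (5).} This is the step needing the most care, mainly because $U_d$ is defined through $\phi$. Suppose (\ref{equ:Oct1820243AM}) holds. Then $d \in \D(G)$ implies $Ideal(\psi(d)) = Ideal(\phi(d)) \subseteq \E(G)$, so $\psi$ is equivariant. Moreover $\psi(d)$ generates $Ideal(\phi(d)) = \E(U_d)$, so $\psi$ is $X$-full.

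Conversely, suppose $\psi$ is equivariant and $X$-full. Fullness gives $Ideal(\psi(d)) = \E(U_d) = Ideal(\phi(d))$ directly; this uses the identification from (3) and the fact that fullness of $\psi(d)$ in $\E(U_d)$ presupposes $\psi(d) \in \E(U_d)$, which equivariance supplies. So the main thing to verify carefully is the identity $\E(U_d) = Ideal(\phi(d))$ from (3); everything else is bookkeeping.
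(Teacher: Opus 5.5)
Your proposal is correct and follows the paper's proof essentially step for step. You use the hull--kernel correspondence for (1), $\phi^{-1}$ commuting with intersections for (2), and $U_d$ as the infimum of $\{G : d \in \D(G)\}$ for (3); the identity $\Phi_{\E}(U_d) = Ideal(\phi(d))$ then drives (4) and (5). The only cosmetic difference is where that identity appears: you read off $U_d = \Phi_{\E}^{-1}(Ideal(\phi(d)))$ directly from the order isomorphism in (3), while the paper derives it in (4) by pushing the infimum through $\Phi_{\E}$.
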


\begin{proof}
	
	Statement (1) follows from \cite{PedersenBook} Theorem 4.1.3 and its proof.
	
	For statement (2), let us firstly prove that $\Phi_{\D}$ preserves
	infima.  Let $\{ G_{\alpha} \}_{\alpha \in I}$ be a collection in 
	$\oo(X)$.   
	We have that 
	\begin{eqnarray*}
		\Phi_{\D}\left( \inf\{ G_{\alpha} : \alpha \in I \}
\right)& = & 
		\phi^{-1}\left(\Phi_{\E}\left(\inf\{ G_{\alpha}
	: \alpha \in I\}\right)\right) 
\makebox{  (definition of }\Phi_{\D}\makebox{)}\\ 
	& = & \phi^{-1}\left(\inf\{\Phi_{\E}(G_{\alpha}) : \alpha \in I\}\right)
\makebox{ (by (1), }\Phi_{\E}\makebox{ preserves infima.)}\\    
& = & \phi^{-1}\left(\bigcap_{\alpha \in I} \Phi_{\E}(G_{\alpha})\right)
		\makebox{  (definition of infimum in $I(\E)$; see
\ref{para:O(X)I(D)}.)}\\
		& = & \bigcap_{\alpha \in I} \phi^{-1}(\Phi_{\E}(G_{\alpha}))\\ 
		& = & \bigcap_{\alpha \in I} \Phi_{\D}(G_{\alpha}) 
		\makebox{ (definition of  }\Phi_{\D}\makebox{)}\\
& = & \inf\{ \Phi_{\D}(G_{\alpha}) : \alpha \in I\}
\makebox{  (definition of infimum in $I(\D)$; see \ref{para:O(X)I(D)})}. 
	\end{eqnarray*}
	Hence, since $\{ G_{\alpha} \}_{\alpha \in I}$ is arbitrary, $\Phi_{\D}$
	preserves infima.

      For the second statement in (2), assume that $\oo(X)$ is finite.
Suppose that $\{ G_{\alpha} \}_{\alpha \in I}$ is an increasing net in 
$O(X)$ (so $I$ is a directed set; and for all $\alpha, \alpha' \in I$,
if $\alpha \leq \alpha'$ then $G_{\alpha} \subseteq G_{\alpha'}$).  
Since $\oo(X)$ is finite, we can choose $\beta \in I$ such that for all
$\alpha \in I$ with $\alpha \geq \beta$, $G_{\alpha} = G_{\beta}$.
Hence, 
$$
\Phi_{\D}(\sup\{ G_{\alpha} : \alpha \in I\})
 =  \Phi_{\D} (G_{\beta})
 =  \sup\{ \Phi_{\D}(G_{\alpha}) : \alpha \in I\}.   
$$ 
Since $\{ G_{\alpha} \}_{\alpha \in I}$ is arbitrary, if $\oo(X)$
is finite then $\Phi_{\D}$ preserves increasing suprema. 
	
	We now prove (3).   Let $d \in \D$.  Consider
	the collection $\mathcal{S} =_{df} \{ G \in \oo(X) : d \in \Phi_{\D}(G) \}$.
	By the definition of $\Phi_{\D}$, 
	\begin{equation} \label{equ:Oct1820241AM}
		\mathcal{S} = \{ G \in \oo(X) : \phi(d) \in \Phi_{\E}(G) \}.  
	\end{equation} 
	Hence, by (1),  $\mathcal{S} \neq \emptyset$.  
	By (2), $\Phi_{\D}$ preserves arbitrary infima.  Hence, if we let  
	$U_d \in \oo(X)$  be given by 
	$U_d =_{df} \inf (\mathcal{S}) =_{df} Int \left(\bigcap \mathcal{S}
\right)$, then 
	\begin{equation} \label{equ:Oct1820242AM} 
		\Phi_{\D}(U_d) = \inf \Phi_{\D}(\mathcal{S}) =_{df}  
\bigcap_{G \in \mathcal{S}} \Phi_{\D}(G)
	\end{equation}  
	and the latter set contains $d$ by the definition of $\mathcal{S}$.  
	Hence, $d \in \Phi_{\D}(U_d)$.  
	Hence, by the definitions of $\mathcal{S}$ and $U_d$, 
	$U_d$ is the smallest open subset of $X$ for which $d \in \Phi_{\D}(U_d)$.

	Next, let us now prove (4).  
	Firstly, for all $G \in \oo(X)$, by the definition 
	of $\Phi_{\D}$, we have that
	$$\phi(\Phi_{\D}(G)) = \phi(\phi^{-1}(\Phi_{\E}(G))) \subseteq
	\Phi_{\E}(G),$$
	and hence, $\phi$ is $X$-equivariant. 
	
	Also, let $d \in \D$ be arbitrary.  Let $U_d \in \oo(X)$ and 
	$\mathcal{S}$ be as in (3) and its proof. 
	By (1) and (\ref{equ:Oct1820241AM}), we have that 
	\begin{equation} \label{equ:Champagnes}
Ideal(\phi(d)) = \bigcap_{G \in  \mathcal{S}} \Phi_{\E}(G).
\end{equation}   
But by (1), $\Phi_{\E}$ preserves infima.  Hence,
$\Phi_{\E}(\inf(\mathcal{S})) = \inf \Phi_{\E}(\mathcal{S})$.  
Hence, by the definition of $U_d$ (just before  
(\ref{equ:Oct1820242AM})) and by \ref{para:O(X)I(D)},
$\Phi_{\E}(U_d) = \bigcap_{G \in \mathcal{S}} \Phi_{\E}(G)$.
From this and (\ref{equ:Champagnes}), $Ideal(\phi(d)) = \Phi_{\E}(U_d)$. 
I.e., 
	$\phi(d)$ is full in $\Phi_{\E}(U_d)$.  Since $d \in \D$ is arbitrary,
	$\phi$ is $X$-full.

	Let us now prove the ``only if" direction of (5). Suppose that
$\psi : \D \rightarrow \E$ is a *-homomorphism such that $Ideal(\phi(a)) =
Ideal(\psi(a))$ for all $a \in \D$, i.e., suppose that
(\ref{equ:Oct1820243AM}) holds.    Let $G \in \oo(X)$.
	Since $\phi$ is $X$-equivariant, 
$\phi(\Phi_{\D}(G)) \subseteq \Phi_{\E}(G)$.
	So $Ideal(\phi(\Phi_{\D}(G))) \subseteq \Phi_{\E}(G)$.
	But by our hypothesis (\ref{equ:Oct1820243AM}),
$	
Ideal(\phi(\Phi_{\D}(G))) = Ideal(\psi(\Phi_{\D}(G))).
$
	Hence, $\psi(\Phi_{\D}(G)) \subseteq \Phi_{\E}(G)$.  
Since $G \in \oo(X)$
	is arbitrary, $\psi$ is $X$-equivariant.
	
	Let $d \in \D$ be arbitrary.   By (3), 
let $U_d \in \oo(X)$ be the smallest
	open subset of $X$ for which $d \in \Phi_{\D}(U_d)$.
	Since $\phi$ is $X$-full, 
	$\phi(d)$ is full in $\Phi_{\E}(U_d)$.  But by our
hypothesis (\ref{equ:Oct1820243AM}),
	$Ideal(\psi(d)) = Ideal(\phi(d)) = \Phi_{\E}(U_d)$.  Hence, $\psi(d)$
	is full in $\Phi_{\E}(U_d)$.  Since $d$ was arbitrary, $\psi$ is 
	$X$-full.  
	
	Finally, let us prove the ``if" direction of (5).  Suppose that 
	$\psi : \D \rightarrow \E$ is an $X$-equivariant and $X$-full
*-homomorphism.
	Let $d \in \D$ be arbitrary.
	By (3), let $U_d \in \oo(X)$ be the smallest open subset of $X$ 
	for which $d \in \Phi_{\D}(U_d)$.  Since $\psi$ is $X$-full,
	$\psi(d)$ is full in $\Phi_{\E}(U_d)$ (see  
	\ref{para:FullXMap}). Hence, $Ideal(\psi(d)) = \Phi_{\E}(U_d)$.
	But by (4), $\phi$ is also $X$-equivariant and $X$-full. Hence,
	$Ideal(\phi(d)) = \Phi_{\E}(U_d) = Ideal(\psi(d))$.   
	Since $d \in \D$ was arbitrary, we are done.  
\end{proof}

\begin{para} \label{para:DfKKX} We will need 
Kirchberg's interesting ideal related KK theory (\cite{KirchbergIdealKK}).  
We 
briefly introduce the subject, giving the ideal analogue of
the generalized homomorphism (or Cuntz) picture of 
KK. For more details, 
we refer the reader to \cite{GabeOinfty} 
Section 12.3 (though our
terminology will slightly differ from theirs;  see
also \cite{MeyerNest1}).  Roughly speaking,
ideal related KK theory is the same as KK theory, except that we require
that all maps be equivariant in the appropriate sense.

Let us firstly quickly recall 
the generalized homomorphism (or Cuntz) picture 
of KK (see \cite{JensenThomsen} Chapter 4).  
Let $\D$ and $\E$ be C*-algebras with $\D$ separable and $\E$ $\sigma$-unital. 
Recall that a  \emph{$KK_h(\D, \E)$-cocycle} is a pair $(\phi, \psi)$ 
of *-homomorphisms $\D \rightarrow \Mul(\E \otimes \K)$ for which
$\phi(d) - \psi(d) \in \E \otimes \K$ for all $d \in \D$.  Recall also,
that two $KK_h(\D, \E)$-cocycles $(\phi_0, \psi_0)$ and 
$(\phi_1, \psi_1)$ are said to be \emph{homotopic} if  
there exists a path $\{ (\phi_t, \psi_t)\}_{t \in [0,1]}$ such that
the following statements are true:  
\begin{itemize}
	\item[i.] $(\phi_t, \psi_t)$ is a $KK_h(\D, \E)$-cocycle for all $t \in [0,1]$.
	\item[ii.] For all $d \in \D$,
	the two maps $[0,1] \rightarrow \Mul(\E \otimes \K)$ given by 
	$t \mapsto \phi_t(d)$ and $t \mapsto \psi_t(d)$ are strictly continuous.    
	\item[iii.] For all $d \in \D$, the map
	$[0,1] \rightarrow \E \otimes \K: t \mapsto \phi_t(d) - \psi_t(d)$
	is norm continuous.                            
\end{itemize}
We let 
$KK(\D, \E)$ denote the homotopy classes of $KK_h(\D, \E)$-cocycles.
$KK(\D, \E)$ has an obvious addition which makes it an abelian group,
and any *-homomorphism $\xi : \D \rightarrow \E$
can induce  a KK-element
by defining  $[\xi]_{KK} =_{df} [(\xi(\cdot) \otimes e_{1,1}, 0)] \in
KK(\D, \E)$. (Here, $\xi(\cdot) \otimes e_{1,1} : \D \rightarrow \E \otimes 
\K \subset  \Mul(\E \otimes \K)$.)   
See \cite{JensenThomsen} Chapter 4 for more details.

To move from KK to Kirchberg's ideal related KK, 
we now bring in an action of a topological space (see 
paragraphs \ref{para:O(X)I(D)}, \ref{para:Actions}). 
We will follow   
\cite{GabeOinfty} Subsection 12.3 and refer to it for more details, 
though we use slightly
different terminology.  
Now let $X$ be a topological space, and suppose, additionally,
that $\D$ and $\E$ are $X$-C*-algebras (see \ref{para:Actions}).      
Then $\E \otimes \K$ is an $X$-C*-algebra, with $X$-action induced
from that for $\E$ in the obvious way. 
A map $\phi : \D \rightarrow \Mul(\E \otimes \K)$ is said to be \emph{weakly 
	$X$-equivariant} if for all $x \in \E \otimes \K$, the map 
$\D \rightarrow \E \otimes \K : d \rightarrow x \phi(d) x^*$ is $X$-equivariant (see 
\ref{para:Actions};  also see \cite{GabeOinfty} Definition 11.1,
Remark 11.2 and Corollary 11.12).  
A $KK_h(\D, \E)$-cocycle $(\phi, \psi)$ is said to be 
\emph{weakly $X$-equivariant} if  $\phi$ and $\psi$ are both weakly  
$X$-equivariant.  Two weakly $X$-equivariant $KK_h(\D, \E)$-cocycles   
$(\phi_0, \psi_0)$ and $(\phi_1, \psi_1)$ are said to be
\emph{$X$-homotopic} if they are homotopic (as in the previous
paragraph; see items i., ii., and iii.) via a path $\{ (\phi_t, \psi_t) \}_{t \in [0,1]}$ such 
that $(\phi_t, \psi_t)$ is weakly $X$-equivariant for all
$t \in [0,1]$. 
$KK(X; \D, \E)$ consists of all $X$-homotopy
classes of weakly $X$-equivariant $KK_h(\D, \E)$-cocycles.
Again, with an obvious addition, $KK(X; \D, \E)$ is an abelian
group, and a given $X$-equivariant *-homomorphism
$\xi : \D \rightarrow \E$ induces an element   
$[\xi]_{KKX} =_{df} [(\xi(\cdot) \otimes e_{1,1}, 0)]
\in KK(X; \D, \E)$.  
(It is not hard to see that $\xi(\cdot) \otimes e_{1,1} : \D \rightarrow
\E \otimes \K \subset \Mul(\E \otimes \K)$ is weakly $X$-equivariant.)   
See \cite{GabeOinfty} Section 12.3 for
more details.
\end{para}

\begin{thm} \label{thm:MainThEnd}
	Let $\A$ be a separable nuclear C*-algebra, and let $\B$ be a 
	nonunital, separable, simple, pure C*-algebra with quasicontinuous
	scale (equivalently, with $\C(\B)$ being purely infinite by
	Theorem \ref{thm:PICoronaCharacterization}).  
	
	Let $X =_{df} Prim(\C(\B))$ (i.e., $X$ is the primitive
ideal space of $\C(\B)$,  given
the Jacobson topology), and let $\sigma : \A \rightarrow
	\C(\B)$ be an injective *-homomorphism.  
Let 
$\Phi_{\A} : O(X) \rightarrow \A$ and
	$\Phi_{\C(\B)} : O(X) \rightarrow \C(\B)$ be the actions on 
	$\A$ and $\C(\B)$ respectively, which are defined as in 
	(\ref{equ:Oct3020241AM}) (in the hypothesis of Lemma
	\ref{lem:Oct3020242AM}, with $\D$, $\E$, $\phi$ replaced with
	$\A$, $\C(\B)$, $\sigma$ respectively).
	
	Then the canonical homomorphism
	$$\Gamma : \ee_{\sigma}(\A, \B) \rightarrow KK(X; \A, \C(\B)) :
	[\psi]_{Ext} \mapsto [\psi]_{KKX}$$
	is a group isomorphism.   
	
	(See Theorem \ref{thm:ExtSigmaIsGroup} and 
	\ref{para:DfKKX}.)
\end{thm}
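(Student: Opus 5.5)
The proof will follow the template of Theorem \ref{thm:ClassifyFullCase}, with ordinary KK replaced by $KK(X; \A, \C(\B))$ and Gabe's non-ideal-related results replaced by their ideal-related counterparts in \cite{GabeOinfty}.

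First, $\Gamma$ is well defined. By Lemma \ref{lem:Oct3020242AM}(5), every $\psi$ representing a class in $\ee_{\sigma}(\A, \B)$ is $X$-equivariant and $X$-full, so $[\psi]_{KKX}$ makes sense. Unitary equivalence by a unitary lifting to $\Mul(\B)$ preserves the class, and so does the BDF sum. The isometries $S, T$ give $X$-equivariant operations, since every ideal of $\C(\B)$ is invariant under conjugation. Hence $\Gamma$ is a homomorphism of the group from Theorem \ref{thm:ExtSigmaIsGroup}.

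For surjectivity, take $x \in KK(X; \A, \C(\B))$. I will invoke the ideal-related existence theorem of \cite{GabeOinfty} (the $X$-equivariant analogue of Theorem A). By Lemma \ref{lem:Oct3020242AM}(2)--(3), the action on $\A$ preserves infima, has smallest open sets $U_d$, and, since $\C(\B)$ has finitely many ideals by Theorem \ref{thm:PICoronaCharacterization}, preserves increasing suprema; so it is lower semicontinuous in the required sense. The target $\C(\B)$ is purely infinite, and is $\oo_\infty$-absorbing in the appropriate sense. These hypotheses yield an $X$-equivariant $\phi_1$ with $[\phi_1]_{KKX} = x$. Let $\phi_0$ represent $0$ in $\ee_{\sigma}(\A, \B)$. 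Then $\phi_0 \oplus \phi_1$ has large complement, and
\[ Ideal((\phi_0 \oplus \phi_1)(a)) = Ideal(\phi_0(a)) + Ideal(\phi_1(a)) = Ideal(\sigma(a)), \]
because $\phi_1$ is equivariant, so $\phi_1(a) \in \Phi_{\C(\B)}(U_a) = Ideal(\sigma(a))$. Thus $[\phi_0 \oplus \phi_1] \in \ee_{\sigma}(\A, \B)$ and it maps to $x$.

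For injectivity, let $[\phi]_{KKX} = [\psi]_{KKX}$ with $\phi, \psi$ in $\ee_{\sigma}(\A, \B)$. Pass to unital maps $\phi^+, \psi^+ : \A^+ \rightarrow \C(\B)$, and extend the action to $\A^+$ by sending $X$ to $\A^+$ and any proper open $G$ to $\Phi_{\A}(G)$. I expect the five-lemma argument on $0 \rightarrow \A \rightarrow \A^+ \rightarrow \mathbb{C} \rightarrow 0$ to transfer to give $[\phi^+]_{KKX} = [\psi^+]_{KKX}$. By Proposition \ref{prop:PICoronaCommutant}, $\phi^+(\A^+)'$ and $\psi^+(\A^+)'$ are properly infinite, so both maps are strongly $\oo_\infty$-stable. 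Both are $X$-full for the extended action, because large complement gives $Ideal(\phi^+(\alpha 1 + a_0)) = \C(\B)$ for $\alpha \neq 0$, exactly as in the proof of Theorem \ref{thm:ClassifyFullCase}. The ideal-related uniqueness theorem of \cite{GabeOinfty} (the $X$-version of Theorem B) then gives asymptotic unitary equivalence. Lemma \ref{lem:PhillipsWeaver1} upgrades this to weak unitary equivalence, and \cite{GabeLinNg} Proposition 3.7 (using large complement and $K_1$-injectivity of the purely infinite $\C(\B)$) upgrades it to unitary equivalence.

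The main obstacle is verifying the precise hypotheses of Gabe's ideal-related existence and uniqueness theorems. These require the action on the domain to be lower semicontinuous and the action on the codomain to be the canonical one. Here Lemma \ref{lem:Oct3020242AM} and the finiteness of $\oo(X)$ do the work. A second subtle point is showing that the unitization step preserves equality of $KK(X;\cdot,\cdot)$ classes. If that step is delicate, I would instead apply the uniqueness theorem directly to the nonunital maps, using $X$-fullness together with the properly infinite commutant.
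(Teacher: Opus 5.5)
Your overall architecture matches the paper, but the surjectivity step has a gap. You never supply the hypotheses that make Gabe's ideal-related existence result (\cite{GabeOinfty} Proposition 15.6) applicable. Lower semicontinuity of $\Phi_{\A}$ plus ``$\C(\B)$ is purely infinite and $\oo_\infty$-absorbing in the appropriate sense'' is not what that result asks for. As the paper's verification shows, it needs two inputs. The first is a seed: an $X$-equivariant, $X$-full, strongly $\oo_\infty$-stable *-homomorphism $\A \rightarrow \C(\B)$. The second is a $\sigma$-unital, full, stable hereditary C*-subalgebra of $\C(\B)$. The missing idea is that $\sigma$ itself is the seed. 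It is $X$-full by Lemma \ref{lem:Oct3020242AM}(4), and it is strongly $\oo_\infty$-stable by applying Proposition \ref{prop:PICoronaCommutant} to $\sigma^+$; you used that proposition only for injectivity. The hereditary subalgebra is $\D_1 = \overline{d\,\C(\B)\,d}$ with $d = \sum_n p_n/n$, where the $p_n$ are pairwise orthogonal projections with $p_n \sim 1_{\C(\B)}$. It is full, and it is stable by Hjelmborg--R{\o}rdam. With these, Proposition 15.6 already produces an $X$-full $\phi$ realizing the given class. Your BDF-sum trick with $\phi_0$ is correct, since $Ideal(S\phi_0(a)S^* + T\phi_1(a)T^*) = Ideal(\phi_0(a)) + Ideal(\phi_1(a))$, but it becomes unnecessary. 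The paper just takes $\phi \oplus 0$ to get large complement and uses Lemma \ref{lem:Oct3020242AM}(5) for the ideal condition.

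For injectivity, your primary route genuinely differs from the paper's and is viable. Unitizing lets you apply Theorem F to unital maps, where asymptotic Murray--von Neumann equivalence is automatically asymptotic unitary equivalence: $v_t^*v_t \to 1$ and $v_tv_t^* \to 1$, then take polar decomposition. Large complement is spent on making $\phi^+, \psi^+$ $X$-full for your extended action, which you check correctly. The cost is the implication $[\phi]_{KKX} = [\psi]_{KKX} \Rightarrow [\phi^+]_{KKX} = [\psi^+]_{KKX}$. This needs split exactness of $KK(X;\cdot,\C(\B))$ for the equivariantly split extension $0 \rightarrow \A \rightarrow \A^+ \rightarrow \mathbb{C} \rightarrow 0$, which is plausible but you leave unproved. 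The paper avoids this by applying Theorem F directly to the nonunital maps $\phi, \psi$. That gives only asymptotic Murray--von Neumann equivalence, which large complement and proper infiniteness of $\C(\B)$ upgrade to asymptotic unitary equivalence via \cite{GabeO2} Proposition 3.10. Your fallback omits this upgrade. Finally, $K_1$-injectivity of $\C(\B)$ should be justified by \cite{ChandNgSutradhar} Theorem 4.9(4), as in the paper, rather than treated as automatic from pure infiniteness.
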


\begin{proof}
	Note that by the definition of $\ee_{\sigma}(\A, \B)$ (see 
	Definition \ref{df:ExtSigma}) and by Lemma \ref{lem:Oct3020242AM}
	item (5),  
	if $[\psi]_{Ext} \in 
	\ee_{\sigma}(\A, \B)$, then $\psi: \A \rightarrow \C(\B)$ is 
	$X$-equivariant, and hence, $\psi$ induces an element
	$[\psi]_{KKX} \in KK(X; \A, \C(\B))$. (See the end of the last paragraph
	of \ref{para:DfKKX}.)  
	Hence, that $\Gamma$ is a well-defined group homomorphism follows from the
	above and from 
	the basic properties of ideal related KK theory (see, for example,
	\ref{para:DfKKX} and \cite{GabeOinfty} Section 12).

	We next prove the surjectivity of the map $\Gamma$.
	Our first goal is to  check that the hypotheses of the existence theorem 
	of \cite{GabeOinfty} Proposition 15.6 are satisfied.   
	Firstly,  $\A$ is a separable nuclear C*-algebra. Also,  by 
	Lemma \ref{lem:Oct3020242AM} item (2),  
	 $\A$ is a lower semicontinuous $X$-C*-algebra in the sense of 
	\cite{GabeOinfty} Definition 10.8.

	Next, since $\C(\B)$ is properly infinite, it contains a unital copy of
	$O_{\infty}$, and hence, we have a sequence $\{ p_n \}_{n=1}^{\infty}$
	of pairwise orthogonal projections in $\C(\B)$ such that for all $n$,
	$p_n \sim 1_{\C(\B)}$.  Let $d \in \C(\B)$ be given by 
	$d =_{df} \sum_{n=1}^{\infty} \frac{p_n}{n}$.  Then 
	$\D_1 =_{df} \overline{d \C(\B) d}$ is a $\sigma$-unital full hereditary
	C*-subalgebra of $\C(\B)$.  Moreover,  by the definition of $d$
and since $d$ is a strictly positive element of $\D_1$,
 it is not hard to check that 
	$\D_1$ satisfies the 
	characterization of stability in \cite{HjelmborgRordam} Theorem 3.3.
Hence, by 
	\cite{HjelmborgRordam} Theorem 3.3, $\D_1$ is stable.  Hence, $\D_1$ is a $\sigma$-unital,
	full, stable, hereditary
	C*-subalgebra of $\C(\B)$.

	Next, by Lemma \ref{lem:Oct3020242AM} item (4), 
$\sigma : \A \rightarrow
	\C(\B)$ is $X$-equivariant and  $X$-full.
Finally, recall that $\A^+$ is the forced unitization of $\A$ (see 
\ref{para:ForcedUnitization}). 
If we let $\sigma^+ : \A^+ \rightarrow \C(\B)$ be the unique unital
extension of
$\sigma$, then by Proposition \ref{prop:PICoronaCommutant}, 
$\sigma^+$ is strongly
	$O_{\infty}$-stable in
	the sense of \cite{GabeOinfty} (1.1).  
Hence, $\sigma$ is also strongly
        $O_{\infty}$-stable in
        the sense of \cite{GabeOinfty} (1.1).

	From the above, the hypotheses of \cite{GabeOinfty} Proposition 15.6 are
	satisfied.  Now let $\alpha \in KK(X; \A, \C(\B))$ be arbitrary.
	By \cite{GabeOinfty} Proposition 15.6,  
	there exists an $X$-equivariant and $X$-full *-homomorphism
	$\phi : \A \rightarrow \C(\B)$ such that 
	$[\phi]_{KKX} = \alpha$. 
	Replacing $\phi$ with $\phi \oplus 0$ 
(where $\oplus$ is the BDF sum; see \ref{para:ExtsigmaSemigroup}) 
 if 
	necessary, we may assume that $\phi$ has large complement.
	Moreover, by Lemma \ref{lem:Oct3020242AM} item (5),   
	$Ideal(\phi(a)) = Ideal(\sigma(a))$ for all $a \in \A$ (which implies
	that $\phi$ is injective, since $\sigma$ is injective -- i.e., 
	$\phi$ is an essential extension).
	Thus, $\phi$ induces an element
	$[\phi]_{Ext} \in \ee_{\sigma}(\A, \B)$ (see Definition
	\ref{df:ExtSigma}) and $\Gamma([\phi]_{Ext})  = \alpha$.
	Since $\alpha \in KK(X; \A, \B)$ is arbitrary, $\Gamma$ is surjective.

	We now prove that $\Gamma$ is injective.  
	Again, let us firstly the check that the hypotheses of the uniqueness
	theorem \cite{GabeOinfty} Theorem F are satisfied. 
	Firstly, note that	
	$\A$ is a separable nuclear C*-algebra.  Also, by Lemma
	\ref{lem:Oct3020242AM} item (2), $\A$ is a lower semicontinuous
	$X$-C*-algebra in the sense of \cite{GabeOinfty} 10.8.

	Now let $[\phi]_{Ext}, [\psi]_{Ext} \in \ee_{\sigma}(\A, \B)$ 
be such that
	$\Gamma([\phi]_{Ext}) = \Gamma([\psi]_{Ext})$; i.e., 
	$[\phi]_{KKX} = [\psi]_{KKX}$ in $KK(X; \A, \C(\B))$.
	So $\phi$ and $\psi$ both are essential extensions
with large complement and 
	$Ideal(\phi(a)) = Ideal(\sigma(a)) = Ideal(\psi(a))$ 
for all $a \in \A$.   
Hence, by Lemma \ref{lem:Oct3020242AM} item (5), $\phi$ and $\psi$ are both $X$-equivariant and $X$-full.  
      
Also, by taking the unique unital extensions $\phi^+$ and $\psi^+$, of
$\phi$ and $\psi$ (resp.), to $\A^+$, and by applying Proposition 
\ref{prop:PICoronaCommutant}, we see that $\phi$ and $\psi$ 
are strongly $O_{\infty}$-stable in the sense of 
\cite{GabeOinfty} (1.1).

Hence, all the hypotheses of \cite{GabeOinfty} Theorem F are fullfilled.
Hence, by \cite{GabeOinfty} Theorem F, $\phi$ and $\psi$ are asymptotically
Murray--von Neumann equivalent in the sense of \cite{GabeOinfty} (1.6).
Hence, since $\C(\B)$ is properly infinite and $\phi$ and $\psi$ both have large complement, 
and by \cite{GabeO2}   
Proposition 3.10, $\phi$ and $\psi$ are asymptotically unitarily equivalent.
Hence, by Lemma \ref{lem:PhillipsWeaver1},
$\phi$ and $\psi$ are weakly unitarily equivalent.  Note
that by \cite{ChandNgSutradhar} Theorem 4.9 (4), $\C(\B)$ is $K_1$-injective.
Hence, by \cite{GabeLinNg} Proposition 3.7 (see also
\cite{ChandNgSutradhar} Lemma 2.2 for another proof), it follows that
$\phi$ and $\psi$ are unitarily equivalent.  Hence, $[\phi]_{Ext} =
[\psi]_{Ext}$.  Since $[\phi]_{Ext}$ and $[\psi]_{Ext}$ 
are arbitrary,    $\Gamma$
is injective.

\end{proof}

\begin{para}
After Theorem \ref{thm:MainThEnd}, 
a next task is to provide computations for ideal related KK-theory.
For the original Kasparov KK theory, this often means providing a 
Universal Coefficient Theorem (\cite{RosenbergSchochet}). 
But there is no general Universal Coefficient Theorem for ideal related
KK theory (e.g., see \cite{Bonkat}, \cite{Bentmann}, \cite{EilersRestorffRuiz},
\cite{MeyerNest2}, \cite{LinExtIII}).  
Moreover, we do not yet have a complete picture for the K theory
of the ideals, and other features.  
Hence, 
we leave these considerations for the future.
\end{para}
\ \\

{\bf Acknowledgements}\\

The second author was supported by the National Natural Science Foundation of China (Grant No. 12201216) and the Visiting Scholars Program of the Chern Institute of Mathematics at Nankai University. The generous support from Prof. Chi-Keung Ng and the supportive research environment provided by the institute are gratefully acknowledged.

\end{document}